\documentclass{amsart}
\usepackage{amsfonts}
\usepackage{amsmath,amssymb}
\usepackage{amsthm}
\usepackage{amscd}
\usepackage{graphics}
\usepackage{graphicx}

\theoremstyle{remark}{
\newtheorem{Def}{{\rm Definition}}
\newtheorem{Ex}{{\rm Example}}
\newtheorem{Rem}{{\rm Remark}}

}
\theoremstyle{plain}
{

\newtheorem{Thm}{Theorem}

}

\begin{document}
\title[On certain Reeb spaces of 1st derivatives of proper submersions]{Reeb spaces of 1st derivatives of proper submersions of certain classes}
\author{Naoki kitazawa}
\keywords{Proper submersions. 1st derivatives. Reeb spaces and Reeb graphs. \\
\indent {\it \textup{2020} Mathematics Subject Classification}: Primary~57R45, 58C05. Secondary~54C30.}
\address{Osaka Central Advanced Mathematical Institute (OCAMI) \\
3-3-138 Sugimoto, Sumiyoshi-ku Osaka 558-8585
TEL: +81-6-6605-3103
}
\email{naokikitazawa.formath@gmail.com}
\urladdr{https://naokikitazawa.github.io/NaokiKitazawa.html}
\maketitle
\begin{abstract}
We study the (canonical) 1st derivatives of {\it proper} submersions represented as height functions and belonging to a certain class: a {\it proper} map means a map the preimage of a compact set by which is always compact. We investigate their {\it Reeb spaces}. 
They are the quotient spaces defined by the equivalence relations on the manifolds of the domains where we identify two points in a same connected component of a same level set of them. They have been important since the establishment of theory of Morse functions, in the 20th century, They are in certain tame situations $0$- or $1$-dimensional and graphs naturally. Related facts have been shown by Gelbukh and Saeki in the 2020s for certain proper smooth real-valued functions. 

In non-proper cases, even related explicit theory has been difficult, except some previously given case of the author. Our study is on a new related case. 
\end{abstract}
%【REVISE】 combinatoric ～ is → combinatorial object. It is .
%【REVISE】  such that a point is a vertex if and only if the corresponding connected component of the level set contains some singular points → whose vertex set is the set of all points containing some singular points in the corresponding connected component of the level set .
%【REVISE】 We delete "extending the result before".
\section{Introduction.}
\label{sec:1}
\subsection{What is the Reeb space of a (continuous) real-valued function $c:X \rightarrow \mathbb{R}$.}
\label{subsec:1.1}

Theory of {\it Morse} functions is a classical theory in singularity theory of differentiable maps and related differential topology, and fundamental and important in various geometry. The {\it Reeb space} $R_c$ of a smooth real-valued function $c:X \rightarrow \mathbb{R}$ or more generally, that of a continuous real-valued function $c:X \rightarrow \mathbb{R}$ has been important since the establishment of the theory, in the 20th century. This is the quotient space $X/{\sim c}$ defined by the equivalence relation $\sim c$ on $X$ with the following rule: $x_1 {\sim}_c x_2$ if and only if $x_1$ and $x_2$ are in a same connected component of some preimage $c^{-1}(t)$ ($t \in \mathbb{R}$). See also \cite{reeb}. For {\it Morse} functions, see \cite{milnor1, milnor2} and more generally for {\it Morse-Bott} functions, see \cite{banyagahurtubise}. Related to this, singularity theory of differentiable maps, see \cite{golubitskyguillemin} for example.

Surprisingly, fundamental theory of Morse functions on compact manifolds are still developing. Moreover, theory of Reeb spaces of smooth functions on compact manifolds are still developing actively, in the 2020s. We focus on Reeb spaces rather than Morse functions or nice smooth functions yielding their nice Reeb spaces.

First, for Morse(-Bott) functions on compact manifolds, their Reeb spaces are naturally graphs. Recently, for more general smooth functions on compact manifolds and continuous functions on compact topological spaces, their Reeb spaces are shown to have natural structures of graphs or shown to be more general Hausdorff spaces of dimension at most $1$, in certain nice situations. See \cite{gelbukh1, gelbukh2, gelbukh3, gelbukh4, saeki1, saeki2}. 

In cases their Reeb spaces are homeomorphic to graphs, we can have their {\it Reeb graphs} as specific graphs in certain nice situations (Theorem \ref{thm:1}). For Morse-Bott functions on compact manifolds, we have them and see \cite{izar, martinezalfaromezasarmientooliveira} in addition to \cite{reeb}.

For cases of non-compact spaces ({\it non-proper} maps), related theory is still immature. Some are explicitly discussed. For example, some conditions for their Reeb spaces to be Hausdorff are found for specific cases in \cite{gelbukh3, gelbukh4}. One of explicit cases is discussed in \cite{kitazawa1} by the author and he has recently discussed explicit non-proper cases additionally: regions in the plane ${\mathbb{R}}^2$ surrounded by the graphs of two smooth real-valued functions and natural smooth maps onto them and the compositions with the canonical projection ${\pi}_{2,1}$ are considered. Note that ${\mathbb{R}}^k$ ($\mathbb{R}:={\mathbb{R}}^1$) is the $k$-dimensional Euclidean space and that ${\pi}_{k,k_1}:{\mathbb{R}}^k \rightarrow {\mathbb{R}}^{k_1}$ denotes the canonical projection ${\pi}_{k,k_1}(x)=x_1$ ($x:=(x_1,x_2)$ with $1 \leq k_1<k$). For this, see preprints \cite{kitazawa4, kitazawa5, kitazawa6, kitazawa7} of the author and here, this is first presented in the subsection \ref{subsec:1.3}.

Note that we do not assume non-trivial arguments in unpublished preprints in the present paper. We review some fundamental notions and notation shortly.
\subsection{Fundamental notions and notation.}
\label{subsec:1.2}
\subsubsection{On maps and topological spaces.}
For a map $c:X \rightarrow Y$ and a subset $Z \subset X$, we use $c {\mid}_Z$ for the restriction of $c$ to $Z$. A function $c:X \rightarrow Y$ between topological spaces is {\it proper} if the preimage of $c^{-1}(K)$ of any compact subset $K \subset Y$ is compact. A map between topological spaces is {\it non-proper} if it is not proper. We use ${\overline{Z}}^X$ for the closure of $Z \subset X$ in a topological space $X$.

For a topological space $X$ with topological dimension being defined, $\dim X$ denotes this. A CW or cell complex is of such spaces and smooth or PL manifolds and polyhedra are CW complexes. A {\it graph} (an {\it E-graph}) is a at most $1$-dimensional CW complex which is locally finite and the closure of each $1$-cell in which is homeomorphic to $D^1$ or $S^1$ (resp. $D^1, S^1, \{t \mid \geq 0\}, \mathbb{R} \subset \mathbb{R}$). For  a at most $1$-dimensional CW complex (or more general cell complex), we call a $0$-cell a {\it vertex} and a 1-cell an {\it edge}. The {\it vertex set} of this is the set of all vertices of it.

Last, let $S^k \text{(}D^{k+1}\text{)}:=\{x \in {\mathbb{R}}^{k+1} \mid {\Sigma}_{j=1}^{k+1} {x_j}^2= \text{(resp.} \leq \text{)}1\}$ denote the $k$-dimensional (unit) sphere (resp. disk).
\subsubsection{Singularity of differentiable (smooth) maps and Reeb graphs of smooth functions.} For a differentiable manifold $X$, let $T_p X$ denote the tangent bundle. This is a real vector space of dimension $\dim X$. We can define the tangent bundle $TX$ over $X$. This is a real vector bundle of dimension $\dim X$ whose fiber is isomorphic to each $T_p X$ and which is a differentiable manifold of dimension $2\dim X$. 
For a differentiable map $c:X \rightarrow Y$, a {\it singular} point $p \in X$ of it means a point where the rank of the differential $dc:TX \rightarrow TY$ drops: $dc$ is a bundle homomorphism between the bundles.
Let the set of all singular points of $c$ be denoted by $S(c)$ and we call it the {\it singular set} of $c$. We use "{\it critical}" instead of "{\it singular}" in the case $Y$ is at most $1$-dimensional. A {\it  submersion} $c$ means a differentiable map $c$ with $S(c)$ being empty. A {\it diffeomorphism} means a submersion which is also a homeomorphism and smooth and we can define the equivalence relation on the family of all smooth manifolds and the notion that two smooth manifolds are {\it diffeomorphic}. 

For a continuous function $c:X \rightarrow \mathbb{R}$ and its Reeb space $R_c$, $q_c:X \rightarrow R_c$ is defined and the unique continuous function $\bar{c}:R_c \rightarrow \mathbb{R}$ with $c=\bar{c} \circ q_c$ is defined.
\begin{Thm}[\cite{saeki1}]
\label{thm:1}
For a smooth function $c:X \rightarrow \mathbb{R}$  on a closed manifold with $c(S(c))$ being finite, the $R_c$ is a graph whose vertex set is $q_c(S(c))$.
\end{Thm}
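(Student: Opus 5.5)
The plan is to decompose $X$ along the finitely many critical values and analyse $c$ separately over open intervals of regular values and near critical level sets. Write $c(S(c))=\{t_1<\dots<t_m\}$ and add a regular value below $\min c$ and one above $\max c$ (both exist since $X$ is compact). Choose $\varepsilon>0$ smaller than half the gaps between consecutive $t_j$, and consider the closed slabs $N_j:=c^{-1}([t_j-\varepsilon,t_j+\varepsilon])$ together with the pieces $c^{-1}([t_j+\varepsilon,t_{j+1}-\varepsilon])$. On the latter $c$ is a proper submersion onto a compact interval, so by Ehresmann's fibration theorem it is a trivial bundle $F\times[a,b]\to[a,b]$ with $F$ a closed manifold. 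Its Reeb space is therefore $\pi_0(F)\times[a,b]$, a finite disjoint union of closed arcs mapped homeomorphically onto $[a,b]$ by $\bar c$. These arcs will become the edges of the graph.

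The key step concerns the slabs $N_j$, where I claim $q_c(N_j)$ is a finite cone (a star). First, each connected component of $c^{-1}(t_j)$ that contains no critical point has a saturated neighbourhood which is a product by Ehresmann, so it contributes an arc through a point of $R_c$ that is not a vertex. Next, let $C$ be a component of $c^{-1}(t_j)$ meeting $S(c)$. I would show that $q_c$ of the connected component $W$ of $N_j$ containing $C$ is the cone over the finite set $\pi_0(W\cap c^{-1}(t_j\pm\varepsilon))$, with cone point $q_c(C)$.

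Two facts are needed for this. The first is that $c^{-1}(t_j)\cap W$ is connected, i.e. equals $C$, once $\varepsilon$ is small. Here I use that the level sets vary upper semicontinuously: a compactness argument shows the components of $c^{-1}([t_j-\varepsilon,t_j+\varepsilon])$ shrink to components of $c^{-1}(t_j)$ as $\varepsilon\to0$, and $c^{-1}(t_j)$ has only finitely many components, since it is a compact set that is locally connected. The second is that every component of $c^{-1}(s)\cap W$ with $s\neq t_j$ reaches the boundary level $t_j\pm\varepsilon$ without merging or splitting. Because $(t_j,t_j+\varepsilon]$ consists of regular values, $c$ restricted to $W\cap c^{-1}((t_j,t_j+\varepsilon])$ is a proper submersion. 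It is therefore again a trivial bundle over a half-open interval, so those Reeb-space pieces are half-open arcs whose closures converge to $q_c(C)$.

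Gluing the arcs and stars yields a finite $1$-dimensional CW structure with vertex set exactly $q_c(S(c))$. One must also check that the quotient topology agrees with the CW topology. This follows because $R_c$ is compact Hausdorff: it is Hausdorff since $c$ separates different levels, and components of a single compact level set are separated by saturated open sets, again using the product neighbourhoods. The map from the assembled graph is then a continuous bijection from a compact space to a Hausdorff space, hence a homeomorphism. The closure of each edge is $D^1$ or $S^1$, the latter when both endpoints coincide. Points of $q_c(S(c))$ must be declared vertices, and if we need non-vertex points of the required form we are done, since regular components give interior edge points.

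I expect the main obstacle to be the local structure at a critical level. Since the critical points need not be isolated or Morse, $C$ can be a wild compact set, and it is not obvious that $c^{-1}(t_j)$ has finitely many components. The argument should not rely on the local topology of $C$; it should use only the product structure on the two sides of the level, combined with compactness. To get finitely many components, I would argue that each component of $c^{-1}(t_j)$ is either regular, in which case it is an open-and-closed piece with a product neighbourhood, or is the limit of components of the regular level $t_j+\varepsilon$, of which there are finitely many. Any component of the critical level is approached from at least one side, because $c$ has no local extremum set isolated in the level except through such limits; in degenerate cases where it is approached from neither side, it is an isolated point of $R_c$, which is impossible unless it is a whole component of $X$. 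This gives finitely many components and makes the star description valid.
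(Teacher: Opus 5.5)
\textbf{Verdict: genuine gap.} The paper gives no proof of this theorem; it only quotes it from Saeki. So I am comparing your proposal with the standard argument, and your architecture follows it: Ehresmann collars over intervals of regular values, a star at each critical component, and a continuous bijection from a finite graph onto the compact Hausdorff space $R_c$.

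\textbf{Where the gap is.} It sits exactly at the step you flag yourself: that $c^{-1}(t_j)$ has finitely many components, and that $W\cap c^{-1}(t_j)$ is connected.
\begin{itemize}
\item The reason you first give, that $c^{-1}(t_j)$ is compact and locally connected, is false in general, as you suspect in your last paragraph. A smooth $c\geq 0$ on $S^2$ can be built with $c^{-1}(0)$ a closed topologist's sine curve, no critical points in $\{0<c<\max c\}$, and a single maximum. Take $\{c\le a_n\}$ to be nested smooth discs shrinking to the curve, with $a_n\to 0$ fast. Then $c(S(c))$ has two elements, yet the critical level is not locally connected.
\item Your shrinking argument for the connectedness of $W\cap c^{-1}(t_j)$ rests on this finiteness.
\item The fallback in your last paragraph does not supply it. Suppose each critical component that is not a whole component of $X$ is approached by one of the finitely many collar components $E\cong F_E\times(t_j,t_j+\varepsilon]$ (or their analogues below $t_j$). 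This gives finiteness only if a single $E$ cannot accumulate on two different components of $c^{-1}(t_j)$. That is never argued, and the remark about local extrema does not address it.
\end{itemize}

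\textbf{The missing idea.} Define the accumulation set
$L_E:=\overline{E}\cap c^{-1}(t_j)=\bigcap_{\delta>0}\overline{E\cap c^{-1}((t_j,t_j+\delta])}$.
It is a nested intersection of non-empty compact connected sets, hence non-empty and connected; the same holds symmetrically for collars below $t_j$.

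\textbf{How it closes the gap.} Fix \emph{any} $\varepsilon$ such that $[t_j-\varepsilon,t_j+\varepsilon]$ contains no other critical value, and any component $W$ of $N_j$. Then $W\cap c^{-1}(t_j)$ is connected. Suppose instead it were $A\sqcup B$ with $A,B$ non-empty and closed. Each $L_E$ would lie in $A$ or in $B$. There are finitely many $E$, and the closure of $E$ in $W$ is $E\cup L_E$. So $A\cup\bigcup_{L_E\subset A}E$ and $B\cup\bigcup_{L_E\subset B}E$ would be disjoint non-empty closed sets covering $W$, contradicting connectedness of $W$.

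Since $N_j$ is a compact manifold with boundary, it has finitely many components. Hence $c^{-1}(t_j)$ has finitely many components, with no shrinking of $\varepsilon$ and no local connectedness needed. A $W$ without collar components is a whole component of $X$ on which $c$ is constant, and it gives an isolated vertex.

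The same fact also supplies two things you need later:
\begin{itemize}
\item continuity of your assembled map at the cone points;
\item the disjoint saturated neighbourhoods needed for Hausdorffness, namely the components of $c^{-1}((t_j-\varepsilon,t_j+\varepsilon))$.
\end{itemize}

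With this inserted, the rest of your plan goes through. One small point remains. When you erase the auxiliary vertices at the levels $t_j\pm\varepsilon$ and at critical-level components containing no critical points, no vertex-free circle can arise, because $c$ attains extrema on every component of $X$.
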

\begin{Def}
The graph $R_c$ in Theorem \ref{thm:1} is the {\it Reeb graph} of $c$. Hereafter, if we give the structure of a complex on $R_c$ by this rule canonically, let us use ${\mathcal{R}}_c$ for $R_c$. 
\end{Def}
For a Riemannian manifold $X$ with $\dim X \geq 1$, we have $UTX$ as a subbundle of $TX$ consisting of all elements in $TX$ of length $1$ with its fiber being $S^{\dim X-1} \subset {\mathbb{R}}^{\dim X}$.
${\mathbb{R}}^m$ is of simplest Riemannian manifolds and on each submanifold $X \subset {\mathbb{R}}^{m}$, its canonical Riemannian metric is induced and in the case $\dim X>0$ we have $UTX$. For a smooth function $c:={\pi}_{m,1} {\mid}_{X}:X \rightarrow \mathbb{R}$ on a smooth submanifold $X \subset {\mathbb{R}}^{m}$, we have a continuous function $c^{\prime,{\pi}_{m,1}}$ on $X$ which is smooth on $X-S(c)$ as follows. We have the unique element $v_{p,c,+} \in T_p X$ for $p \in X-S(c)$ satisfying the following. $v_{p,c,+}$ is orthogonal to all elements of $T_p X$ mapped to the zero vector by $dc$ and by $dc$, $v_{p,c,+}$ is mapped to a non-zero vector of $T_{c(p)} \mathbb{R}$ positively oriented and represented by a positive number $r_{p,c,+}$ canonically. For $p \in S(c)$, let $r_{p,c,+}:=0$. We can define a smooth function $0 \leq {c^{\prime,{\pi}_{m,1}}}(p):=r_{p,c,+} \leq 1$.

\subsection{Our regions in ${\mathbb{R}}^2$ and natural smooth maps onto them.}
\label{subsec:1.3}
Our region is a non-empty, open, and connected set $D_{\{S_j\}_{j=1}^l}$ in ${\mathbb{R}}^2$ and with an $m$-dimensional submanifold $X_{D,{\{S_j\}_{j=1}^l},m} \subset {\mathbb{R}}^{m_0}$ ($m_0 \geq m \geq 2$) having no boundary, as follows.
\begin{itemize}
\item The set ${\overline{D_{\{S_j\}_{j=1}^l}}}^{{\mathbb{R}}^2}-D_{\{S_j\}_{j=1}^l}$ is the disjoint union ${\sqcup}_{j=1}^l S_j$ of $l \geq 0$ curves. 
\item $S_j=\{x \in {\mathbb{R}}^2 \mid f_j(x)=0\}$, where $f_j:{\mathbb{R}}^2 \rightarrow \mathbb{R}$ is a smooth function such that $f_j {\mid}_{S_j}$ contains no critical point.
\item $X_{D,{\{S_j\}_{j=1}^l},m}$ is the zero set of some smooth map $f_{X_{D,{\{S_j\}_{j=1}^l},m}}:{\mathbb{R}}^{m_0} \rightarrow {\mathbb{R}}^{m_0-m}$ such that $S(f_{X_{D,{\{S_j\}_{j=1}^l},m}} {\mid}_{X_{D,{\{S_j\}_{j=1}^l},m}})$ is empty.
\item The image of the map ${\pi}_{m_0,m_0-m} {\mid}_{X_{D,{\{S_j\}_{j=1}^l},m}}$ is ${\overline{D_{\{S_j\}_{j=1}^l}}}^{{\mathbb{R}}^2}$. 
\item The relation ${\pi}_{m_0,m_0-m}(S({\pi}_{m_0,m_0-m} {\mid}_{X_{D,{\{S_j\}_{j=1}^l},m}}))={\sqcup}_{j=1}^l S_j$ also holds.
\end{itemize}
The case $(D_{\{S_j\}_{j=1}^1},S_1)=(D^2-S^1,S^1)$ with $X_{D,\{S_1\},m}:=S^m$ and $m_0=m+1$ is of simplest and certain generalized cases are first presented in \cite{kitazawa2}, followed by the author himself in the preprint \cite{kitazawa3}.
As presented above, the case $l=2$ with $S_i=\{(c_i(x),x) \mid x \in \mathbb{R}\}$ for some smooth functions $c_i:\mathbb{R} \rightarrow \mathbb{R}$ satisfying $c_2(x)-c_1(x)>0$ ($x \in \mathbb{R}$) is considered and of our central objects. For this, see the preprints \cite{kitazawa4, kitazawa5, kitazawa6, kitazawa7} again and see also \cite{kitazawa8} for example. Remember that the author has been interested in the (non-proper) functions ${\pi}_{m_0,1} {\mid}_{X_{D,{\{S_j\}_{j=1}^l},m}}$.
\subsection{Our main result.}
\label{subsec:1.4}
We present some of our main result.
\begin{Thm}
\label{thm:2}
Let $l=2$, $m \geq 2$ be an integer, $m_0=m+2$, $t_c<1$, and $c_{+}:\mathbb{R} \rightarrow \mathbb{R}$ be a smooth positive-valued function. 
\begin{enumerate}
\item \label{thm:2.1} We have $X_{D,{\{S_j\}_{j=1}^l},m}:=X_{c_{+},m}:=\{(x_1,x_2,t,(y_j)_{j=1}^{m-1}) \mid (1-t)(t-t_c)-{\Sigma}_{j=1}^{m-1} {y_j }^2=0, tc_{+}(x_1)-x_2=0\}$. ${\pi}_{m_0,1} {\mid}_{X_{c_{+},m}}$ is a proper submersion. %This is already presented in \cite{kitazawa5} as its main result.
\item \label{thm:2.2} For $c_{{+},m}:={\pi}_{m_0,1} {\mid}_{X_{c_{+},m}}$, ${\mathcal{R}}_{{c_{{+},m}}^{\prime,{\pi}_{m_0,1}}}$ is a $1$-dimensional CW complex. If $S(c_{+})$ is discrete and unbounded above and below in $\mathbb{R}$, then its vertex set is discrete if and only if for each connected component $I_{c_{+}}$ of $\mathbb{R}-S(c_{+})$, ${c_{{+},m}}^{\prime,{\pi}_{m_0,1}}(\{x \in I_{c_{+}}\mid {c_{+}}^{\prime \prime}(x)=0\})$ is a finite set, where ${c_{+}}^{\prime \prime}$  is the 2nd derivative of $c_{+}$.
\end{enumerate}
\end{Thm}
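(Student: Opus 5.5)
The plan is to make everything explicit through a parametrization of $X_{c_{+},m}$ by $\mathbb{R}\times S^{m-1}$. Put $\Sigma:=\{(t,y)\mid (1-t)(t-t_c)-\Sigma_{j=1}^{m-1}{y_j}^2=0\}$; since $t_c<1$, this is a smoothly embedded round-type $(m-1)$-sphere in ${\mathbb{R}}^m$. The map $\Phi(x_1,(t,y)):=(x_1,tc_{+}(x_1),t,y)$ is then a diffeomorphism $\mathbb{R}\times\Sigma\rightarrow X_{c_{+},m}$. To see that $X_{c_{+},m}$ is a submanifold cut out regularly, I would check that the two defining functions have independent differentials. The differential of $tc_{+}(x_1)-x_2$ always has a nonzero $x_2$-component. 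The differential of $(1-t)(t-t_c)-\Sigma_{j=1}^{m-1}{y_j}^2$ involves only $(t,y)$ and vanishes nowhere on $\Sigma$. Under $\Phi$ the function $c_{{+},m}$ becomes the projection onto $\mathbb{R}$. This projection is a submersion, and the preimage of a compact $K$ is $K\times\Sigma$, which is compact. That gives (\ref{thm:2.1}).

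For (\ref{thm:2.2}) I would first compute ${c_{{+},m}}^{\prime,{\pi}_{m_0,1}}$ explicitly. The value $r_{p}$ is the length of the orthogonal projection of $e_1$ onto $T_pX_{c_{+},m}$, equivalently $1/|v|$ where $v$ is the tangent lift of $\partial/\partial x_1$ orthogonal to the fibre. The fibre tangent space is $\{(0,c_{+}(x_1)w_t,w)\mid w\in T_{(t,y)}\Sigma\}$, and the horizontal tangent vectors are $(1,tc_{+}^{\prime}(x_1)+c_{+}(x_1)u_t,u)$. Solving the orthogonality condition is a linear problem in $u\in T\Sigma$. It gives $r^{-2}=1+t^2{c_{+}^{\prime}(x_1)}^2\,\phi(c_{+}(x_1),t)$ for an explicit positive function $\phi$. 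This $\phi$ depends on $(t,y)$ only through $t$ and $|y|^2=(1-t)(t-t_c)$; it comes from the $t$-component of the unit normal of $\Sigma$ and vanishes at the poles $t=t_c,1$. Consequently ${c_{{+},m}}^{\prime,{\pi}_{m_0,1}}$ is invariant under the $O(m-1)$-action on $y$ and factors as $G(x_1,t)$ on the strip $\mathbb{R}\times[t_c,1]$.

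Next I would pass to this strip. For $m\geq 3$ the orbits are connected $(m-2)$-spheres. For $m=2$ they are pairs of points, but these are joined through the poles, and the level sets of $G$ are symmetric. So the Reeb space of ${c_{{+},m}}^{\prime,{\pi}_{m_0,1}}$ should be identified with the Reeb space of $G$ on the closed strip. On each slice $\{x_1\}\times\Sigma$ with $x_1\in S(c_{+})$ we have $G\equiv 1$, the global maximum. On a component $I_{c_{+}}$ of $\mathbb{R}-S(c_{+})$, $G<1$ except at the poles. The critical points of $G$ come from two sources: the $t$-derivative, an explicit one-variable condition solved by a curve $t=\tau(x_1)$, and the $x_1$-derivative. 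The latter involves $c_{+}^{\prime}c_{+}^{\prime\prime}$ and $c_{+}^{\prime}c_{+}$, and along $\tau$ it reduces, after dividing by the nonvanishing $c_{+}^{\prime}$, to the vanishing of a quantity I expect to be equivalent to ${c_{+}}^{\prime\prime}(x_1)=0$. Once the local structure near the finitely or infinitely many such values is described, this shows that $R$ is a $1$-dimensional CW complex. The edges are pieces of level-set components varying monotonically. The vertices are $q$ of the critical level components together with the maximal slices over $S(c_{+})$.

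Finally, for the criterion, the vertex set restricted to the part over $I_{c_{+}}$ is the set of points $q(\{G=s\})$ for $s$ in $G(\{x\in I_{c_{+}}\mid {c_{+}}^{\prime\prime}(x)=0\})$ (plus endpoint and maximum vertices). These values accumulate exactly when that image is infinite. Discreteness of $S(c_{+})$ and its unboundedness above and below make every $I_{c_{+}}$ a bounded interval, so local finiteness can be checked interval by interval. I expect the main obstacle to be the explicit computation of $\phi$ and the verification that the critical points of $G$ correspond exactly to zeros of ${c_{+}}^{\prime\prime}$. That includes the behaviour near the poles $t=t_c,1$ and near $\partial I_{c_{+}}$, where $G\rightarrow 1$ and nondegeneracy must be handled by hand.
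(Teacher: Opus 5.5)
Your proof of (\ref{thm:2.1}) is correct, and cleaner than the paper's implicit-function argument. The gap is in (\ref{thm:2.2}), at exactly the computation you flag, and both predictions you make there are wrong.

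First, $\phi$ does not vanish at the poles. At a pole $(t,y)=(t_c,0)$ or $(1,0)$ the tangent space of $\Sigma$ has no $t$-component. So $(1,t{c_+}^{\prime},0,0)$ is already tangent to $X_{c_+,m}$ and orthogonal to the fibre, which gives $G=(1+t^2{{c_+}^{\prime}}^2)^{-1/2}$ there. Solving your linear problem in general gives
\[
r^{-2}=1+\frac{t^2{{c_+}^{\prime}}^2}{1+{c_+}^2(1-{\nu_t}^2)},\qquad 1-{\nu_t}^2=\frac{4(1-t)(t-t_c)}{(1-t_c)^2},
\]
where $\nu_t$ is the $t$-component of the unit normal of $\Sigma$. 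So $\phi$ equals $1$, not $0$, at the poles, and ``$G<1$ except at the poles'' is false. In fact $G^{-1}(1)$ is only the union of the slices over $S(c_+)$ and $\{t=0\}$. This is why, in the paper, the top level of the Reeb space is a discrete set for $t_c>0$ and a single point for $t_c\leq 0$.

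Second, the critical points attached to ${c_+}^{\prime\prime}=0$ do not lie on an interior curve $t=\tau(x_1)$. They lie on the boundary lines $t\in\{t_c,1\}$ of your strip. There $t|_{\Sigma}$ is itself critical, so only $\partial_{x_1}G$ has to vanish, and $\partial_{x_1}(t^2{{c_+}^{\prime}}^2)=2t^2{c_+}^{\prime}{c_+}^{\prime\prime}$; this is the paper's Theorem \ref{thm:5}. In the interior the $x_1$-condition reads ${c_+}^{\prime\prime}(1+{c_+}^2s)=c_+{{c_+}^{\prime}}^2s$ with $s=1-{\nu_t}^2>0$, and this fails whenever ${c_+}^{\prime\prime}=0\neq{c_+}^{\prime}$. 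So the equivalence you expect is false, and your critical-point analysis leaves out the strip boundary, which is exactly where the relevant points are.

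Even after this correction, your last step (``these values accumulate exactly when that image is infinite'') is only asserted. The paper gets both directions by cutting $X_{c_+,m}-G^{-1}(1)$ into the pieces $X_{c_+,m,a_j,a_{j+1}}$ (resp.\ $X_{c_+,m,a_j,a_{j+1},\pm}$ when $t_c<0$). On each piece the function is proper into $(0,1)$, and the paper then applies Theorem \ref{thm:4}. Part (2) of that theorem is what turns accumulating critical values into a non-discrete vertex set. Accumulation at the value $1$ (the $I_{c_+}$-to-$0$ case) is treated separately, near the top level.

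One caution if you pursue your explicit route. The paper's STEP 2-2 takes the unit horizontal vector proportional to $(1,t{c_+}^{\prime},0,0)$ everywhere, which is correct only at the poles. With the formula above and ${c_+}^{\prime}\neq 0$, $\partial_t(r^{-2})$ has the sign of $t$ when $t_c\leq 0$, so the list in Theorem \ref{thm:5} is right in that case. For $t_c>0$, suppose ${c_+}^2>(1-t_c)/(2t_c)$ on the closure of a bounded component of $\mathbb{R}-S(c_+)$. Then $r^{-2}$ has an interior minimum in $t$ on each fibre there. The maximum over $x_1$ of this fibrewise minimum gives critical points with ${c_+}^{\prime\prime}>0$, which Theorem \ref{thm:5} does not list.
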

Theorem \ref{thm:2} (\ref{thm:2.2}) is satisfied for positive real analytic function $c_{+}$ such that $S(c_{+})$ is unbounded. In the next section, we discuss Theorem \ref{thm:2} and our related additional result (Theorems \ref{thm:6}, \ref{thm:7} and \ref{thm:8}).
\section{Our main result.}
We present the converse of Theorem \ref{thm:1}. Note that Theorems \ref{thm:1} and \ref{thm:3} are first presented as \cite[Theorem 3.1]{saeki1}.
\begin{Thm}[\cite{saeki1}]

\label{thm:3}
For a smooth function $c:X \rightarrow \mathbb{R}$ on a closed manifold such that the ${\mathcal{R}}_c$ is a graph, then $c(S(c))$ must be finite.
\end{Thm}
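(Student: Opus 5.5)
We argue by contradiction. Assume $c(S(c))$ is infinite; we will show that ${\mathcal{R}}_c$, with the structure described in Theorem \ref{thm:1} (vertex set $q_c(S(c))$, since ${\mathcal{R}}_c$ is a graph), cannot be a finite graph. The facts we use are: $X$ is compact, so $R_c$ is compact; $S(c)$ is a closed subset of $X$, so $c(S(c))$ is a compact subset of $\mathbb{R}$; and $\bar{c} \circ q_c = c$.

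\textbf{Step 1: an accumulating sequence of vertices.} Since $c(S(c))$ is infinite and compact, it has an accumulation point $t_\infty$. Choose critical points $p_n \in S(c)$ whose critical values $t_n := c(p_n)$ are pairwise distinct and converge to $t_\infty$. The points $v_n := q_c(p_n)$ are then pairwise distinct, because $\bar{c}(v_n) = t_n$. By hypothesis each $v_n$ lies in the vertex set $q_c(S(c))$.

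\textbf{Step 2: the vertex set is infinite, contradicting local finiteness.} $R_c$ is compact and a graph is locally finite by definition. So $R_c$ is a compact locally finite CW complex, which means it is a finite complex. In particular its vertex set is finite. But Step 1 produced infinitely many distinct vertices $v_n$, a contradiction.

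\textbf{Step 3: make sure the vertex set really is $q_c(S(c))$.} The main obstacle is the interpretation of the hypothesis. If ``${\mathcal{R}}_c$ is a graph'' only means that $R_c$ is homeomorphic to some graph, with an unrelated cell structure, then the $v_n$ need not be vertices. In that case we must show that each $q_c(p)$ with $p \in S(c)$ is a topologically distinguished point of $R_c$. The plan is to show that a point of $R_c$ lying in an open edge on which $\bar{c}$ is locally injective and monotone comes from a regular component. Then we would argue as follows, using the fact that $\bar{c}$ takes only the values $t_n$ at the $v_n$.

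\begin{itemize}
\item By compactness of $R_c$, pass to a subsequence with $v_n \to v_\infty$, so that $\bar{c}(v_\infty) = t_\infty$.
\item Since $R_c$ is a finite graph, a small neighborhood of $v_\infty$ is a cone on finitely many arcs.
\item On each arc, $\bar{c}$ is continuous and takes the values $t_n$. By Sard-type arguments applied to the fiber components of $c$, $\bar{c}$ restricted to an arc cannot be constant on a subinterval unless that subinterval comes from critical components.
\item Consequently infinitely many distinct critical level components accumulate along an arc.
\end{itemize}

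We then intend to derive a contradiction with Step 2 by applying Theorem \ref{thm:1}-type reasoning locally: restrict $c$ to a neighborhood of the fiber over a small interval $[t_\infty - \varepsilon, t_\infty + \varepsilon]$. I expect this local structural analysis to be the hardest part. The first thing to try is a direct appeal to the structure result in \cite{saeki1}, where both implications are proved together.
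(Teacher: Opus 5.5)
Your Steps 1 and 2 are correct and already give the complete proof. The paper writes $\mathcal{R}_c$ for $R_c$ with the canonical structure whose vertex set is $q_c(S(c))$. Since $R_c=q_c(X)$ is compact and a compact CW complex has only finitely many cells, $q_c(S(c))$ is finite, and hence so is $c(S(c))=\bar{c}(q_c(S(c)))$. You do not even need the accumulation point, because infinitely many critical values already give infinitely many distinct vertices. The paper offers no argument of its own, only the citation of Saeki, and under this vertex-set convention this direction is exactly the immediate observation you make.

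Step 3, however, should be deleted rather than completed. Under the reading you worry about ($R_c$ merely homeomorphic to some graph), the statement is false, so the contradiction you are aiming for does not exist. Here is a counterexample.
\begin{itemize}
\item Let $h$ be the height function on $S^m$.
\item Let $g(s)=\int_0^s \psi(u)\,du$, where $\psi(u)=e^{-1/u^2}\sin^2(\pi/u)$ for $u\neq 0$ and $\psi(0)=0$. Then $g$ is smooth and strictly increasing.
\item Put $c=g\circ h$. Since $g$ is injective, $c$ has exactly the same level sets as $h$, so $R_c=R_h$ is a finite graph (an arc for $m\geq 2$).
\item Since $g'=\psi$ vanishes at every $1/k$, we have $S(c)\supset h^{-1}(1/k)$ for all integers $k\neq 0$. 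This gives infinitely many critical values $g(1/k)$.
\end{itemize}
In this example the points $q_c(h^{-1}(1/k))$ accumulate in the interior of an open edge, which is perfectly compatible with $R_c$ being a finite graph. So your plan (critical components accumulating along an arc, then a contradiction via Theorem \ref{thm:1}-type local analysis) cannot succeed. The vertex-set condition built into the notation $\mathcal{R}_c$ is essential, and it is exactly what makes Steps 1--2 sufficient.
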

By arguments in the original article, we have Theorem \ref{thm:4}.
\begin{Thm}
\label{thm:4}
\begin{enumerate}
\item Theorems \ref{thm:1} and \ref{thm:3} hold for a proper smooth function $c:X \rightarrow \mathbb{R}$ on a manifold with no boundary, with $c(s(c))$ being replaced by closed in $\mathbb{R}$ and discrete and graphs being replaced by E-graphs.
\item For a proper smooth function $c:X \rightarrow \mathbb{R}$ on a manifold with no boundary such that $c(S(c))$ is not closed in $\mathbb{R}$ or that $c(S(c))$ is not discrete, there exists a point $s \in \mathbb{R}$ whose arbitrary neighborhood in $\mathbb{R}$ contains some points of $c(S(c))$ and for such a point $s$, there exists a point $s_0 \in {\bar{c}}^{-1}(s)$ whose arbitrary neighborhood contains some vertices of ${\mathcal{R}}_c$. 
\end{enumerate}
\end{Thm}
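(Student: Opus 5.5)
The plan is to localize the argument of \cite[Theorem 3.1]{saeki1}, which underlies Theorems \ref{thm:1} and \ref{thm:3}, using properness. Properness makes each preimage $c^{-1}([a,b])$ of a compact interval a compact manifold with boundary. Hence each level set $c^{-1}(t)$ is compact and has finitely many connected components. Over a compact interval $[a,b]$ containing no critical values, $c$ is a proper submersion, so by Ehresmann's fibration theorem $c^{-1}([a,b]) \cong c^{-1}(a)\times [a,b]$ compatibly with $c$. Consequently, over any open interval $J \subset \mathbb{R}-c(S(c))$, ${\bar{c}}^{-1}(J)$ is a finite disjoint union of open arcs on which $\bar{c}$ is a homeomorphism onto $J$.

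For part (1), first direction: assume $c(S(c))$ is closed and discrete. Then $c(S(c))$ is locally finite in $\mathbb{R}$ and meets each compact interval in finitely many points. The vertex set will be $q_c(S(c))$. Over each component $J$ of $\mathbb{R}-c(S(c))$ (a bounded or unbounded open interval), the fibration argument gives finitely many open arcs. For each singular value $s$, ${\bar{c}}^{-1}(s)$ is finite, since $c^{-1}(s)$ is compact. Applying Saeki's local analysis on the compact piece $c^{-1}([s-\varepsilon,s+\varepsilon])$ shows that $q_c$ restricted there is a quotient onto a finite graph. Gluing these local finite graphs gives a locally finite $1$-dimensional CW structure. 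Closures of edges are $D^1$ or $S^1$ over bounded $J$, and homeomorphic to $\{t \geq 0\}$ or $\mathbb{R}$ over unbounded $J$ (the latter when $c(S(c))=\emptyset$, giving $\mathbb{R}$ or a union of copies). Thus $\mathcal{R}_c$ is an E-graph. Hausdorffness and local finiteness follow from the local finiteness of the cover by the $c^{-1}([a,b])$.

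For the converse in part (1), note that an E-graph has a locally finite, hence discrete and closed, vertex set. So it suffices to show that $c(S(c))$ non-closed or non-discrete contradicts the E-graph structure, and this reduces to part (2). For part (2): if $c(S(c))$ is not closed, or not discrete, there is $s \in \mathbb{R}$ that is an accumulation point of $c(S(c))$ (a limit point not in the set, or a non-isolated point of it). Take singular values $s_n \to s$, $s_n \neq s$, and critical points $p_n$ with $c(p_n)=s_n$. Properness makes $c^{-1}([s-1,s+1])$ compact, so a subsequence $p_n \to p$ with $c(p)=s$. Put $s_0:=q_c(p) \in {\bar{c}}^{-1}(s)$. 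By continuity of $q_c$, any neighborhood of $s_0$ contains $q_c(p_n)$ for large $n$.

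The main obstacle is showing that infinitely many of the points $q_c(p_n)$ are genuine vertices, i.e.\ not removable points of an edge, so that no graph structure can exist near $s_0$. Here I will follow Saeki's argument for Theorem \ref{thm:3}, again applied on the compact piece $c^{-1}([s-1,s+1])$. The first step is to restrict to a component and use that the level components through $p_n$ change topology or branch. If they did not, critical values accumulating at $s$ would still force the point $s_0$ to fail to have a neighborhood that is a finite cone. The argument goes by contradiction: assume $s_0$ has a neighborhood homeomorphic to a finite cone on finitely many points, with $\bar{c}$ monotone on each cone edge. Pulling back, the corresponding part of $c^{-1}((s-\delta,s+\delta))$ would fiber over each punctured edge. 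Combined with Sard-type and compactness reasoning as in \cite{saeki1}, this gives a contradiction with the presence of critical values $s_n$ there. Once vertices accumulate at $s_0$, the vertex set is not discrete, which also completes the converse in part (1).
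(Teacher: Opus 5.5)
Your overall route, localizing Saeki's arguments via properness and Ehresmann's theorem over intervals of regular values, is the one the paper has in mind; the paper proves this statement only by appeal to the arguments of the original article. The problem is the step you call the main obstacle, which would fail. You aim to show that infinitely many $q_c(p_n)$ are ``genuine'' vertices, not removable points of an edge, so that $R_c$ carries no graph structure near $s_0$. That claim is false. Let $X=\mathbb{R}$ and $c(x)=\int_0^x (1+u^4)e^{-1/u^2}\sin^2(1/u)\,du$. Then $c$ is a proper, strictly increasing smooth function whose critical values $c(1/(k\pi))$ accumulate at $c(0)$. Yet $\bar{c}:R_c\to\mathbb{R}$ is a homeomorphism, so every $q_c(p_n)$ is an interior point of an arc. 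For examples in any dimension, take $c\circ h$ with $h$ a proper Morse function having $0$ as an interior point of $h(X)$. So no finite-cone/Sard/compactness argument can produce your contradiction: over a punctured cone edge the preimage can be a topological product and still contain degenerate critical points.

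Fortunately the step is also unnecessary. In the statement, ${\mathcal{R}}_c$ carries the canonical structure whose vertex set is by definition $q_c(S(c))$, so the $q_c(p_n)$ are vertices automatically. Choose the $s_n$ pairwise distinct. Then $\bar{c}(q_c(p_n))=s_n\neq s=\bar{c}(s_0)$ shows they are infinitely many distinct vertices other than $s_0$, and every neighborhood of $s_0$ contains all but finitely many of them. So (2) is finished right after your continuity step. The converse in (1) then follows as you say, or directly: $\bar{c}$ is proper since $\bar{c}^{-1}(K)=q_c(c^{-1}(K))$, so it maps the closed discrete set $q_c(S(c))$ onto a locally finite set $c(S(c))$.

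You should also correct two claims in your opening paragraph. Compactness does not give $c^{-1}(t)$ finitely many components, since a smooth function can vanish exactly on a Cantor set. And $c^{-1}([a,b])$ is a manifold with boundary only when $a,b$ are regular values. For a critical value $s$ isolated in $c(S(c))$, finiteness of $\bar{c}^{-1}(s)$ needs Saeki's argument: $c^{-1}([s-\varepsilon,s+\varepsilon])$ has finitely many components, and by the product collars each meets $c^{-1}(s)$ in a nested intersection of compact connected sets, hence in a connected set.
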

For Theorems \ref{thm:1}, \ref{thm:3} and \ref{thm:4}, see also \cite{saeki2}.

We prove Theorem \ref{thm:2}.
\begin{proof}[A proof of Theorem \ref{thm:2}]
We first prove the statement (\ref{thm:2.1}) in STEP 2-1. In STEP 2-2, we investigate ${c_{{+},m}}^{\prime,{\pi}_{m_0,1}}$ and $S({c_{{+},m}}^{\prime,{\pi}_{m_0,1}})$. These are review of some previous result of the author \cite{kitazawa7}. Last, in STEP 2-3, we prove the statement (\ref{thm:2.2}), as one of our essentially new result. \\
\ \\
STEP 2-1 The statement (\ref{thm:2.1}). \\

The statement (\ref{thm:2.1}) is already shown in \cite{kitazawa5} and we review this in a self-contained way.

First, the set $X_{c_{+},m}:=\{(x_1,x_2,t,(y_j)_{j=1}^{m-1}) \mid (1-t)(t-t_c)-{\Sigma}_{j=1}^{m-1} {y_j }^2=0, tc_{+}(x_1)-x_2=0\}$ is the zero set of the real polynomial map $F_{l,m,t_c,c_{+}}(x_{1},x_{2},t,{(y_{j})}_{j=1}^{m-1})=(F_{l,m,t_c,c_{+},1}(x_{1},x_{2},t,{(y_{j})}_{j=1}^{m-1}),F_{l,m,t_c,c_{+},2}(x_{1},x_{2},t,{(y_{j})}_{j=1}^{m-1})):=((1-t)(t-t_c)-{\Sigma}_{j=1}^{m-1} {y_j }^2, tc_{+}(x_1)-x_2)$ from ${\mathbb{R}}^{m_0}$ to ${\mathbb{R}}^2$. This is mapped onto a non-empty, open and connected set $D_{l,m,t_c,c_{+}}:=\{(x_1,x_2)\mid tc_{+}(x_1)-x_2=0, t_c<t<1\}$ of ${\mathbb{R}}^2$ by ${\pi}_{m_0,2}$.

We apply implicit function theorem to see that $X_{D,{\{S_j\}_{j=1}^l},m}:=X_{c_{+},m}$ is a desired $m$-dimensional smooth submanifold of ${\mathbb{R}}^{m_0}$ with no boundary.

At $(x_{1,0},x_{2,0},t_0,{(y_{j,0})}_{j=1}^{m-1}) \in X_{c_{+},m}$ with $(x_{1,0},x_{2,0}) \in D_{l,m,t_c,c_{+}}$, the value of the partial derivative $\frac{\partial F_{l,m,t_c,c_{+},1}}{\partial y_{j_0}}$ is non-zero for some $y_{j_0}$, and that of the partial derivative $\frac{\partial F_{l,m,t_c,c_{+},2}}{\partial y_{j}}$ is $0$. At the point the value of the partial derivative $\frac{\partial F_{l,m,t_c,c_{+},2}}{\partial t}$ is $c_{+}(x_1)>0$. At the point the value of the partial derivative $\frac{\partial F_{l,m,t_c,c_{+},1}}{\partial t}$ is non-zero if and only if $t_0 \neq 0$. We focus on some $y_{j_0}$ and $t$ for implicit function theorem.

At $(x_{1,0},x_{2,0},t_0,{(y_{j,0})}_{j=1}^{m-1}) \in X_{c_{+},m}$ with $(x_{1,0},x_{2,0}) \in {\overline{D_{l,m,t_c,c_{+}}}}^{{\mathbb{R}}^2}-D_{l,m,t_c,c_{+}}$, the value of the partial derivative $\frac{\partial F_{l,m,t_c,c_{+},1}}{\partial y_{j}}$ is $0$, the value of the partial derivative $\frac{\partial F_{l,m,t_c,c_{+},1}}{\partial t}$ is $0$, and that of the partial derivative $\frac{\partial F_{l,m,t_c,c_{+},1}}{\partial x_{j}}$ is $0$. At the point, the value of the partial derivative $\frac{F_{\partial l,m,t_c,c_{+},2}}{\partial t}$ is $c_{+}(x_1)>0$, that of the partial derivative $\frac{F_{l,m,t_c,c_{+},2}}{\partial x_2}$ is $-1$, and that of the partial derivative $\frac{F_{l,m,t_c,c_{+},2}}{\partial y_j}$ is $0$. We focus on some $x_2$ and $t$ for implicit function theorem.

We can apply implicit function theorem to see that $X_{D,{\{S_j\}_{j=1}^l},m}:=X_{c_{+},m} \subset {\mathbb{R}}^{m_0}$ is our desired submanifold.
By the construction, $c_{{+},m}:={\pi}_{m_0,1} {\mid}_{X_{c_{+},m}}$ is a proper submersion.
This completes the proof of the statement (\ref{thm:2.1}). \\
\ \\
STEP 2-2 ${c_{{+},m}}^{\prime,{\pi}_{m_0,1}}$ and $S({c_{{+},m}}^{\prime,{\pi}_{m_0,1}})$. \\

We investigate ${c_{{+},m}}^{\prime,{\pi}_{m_0,1}}$ and $S({c_{{+},m}}^{\prime,{\pi}_{m_0,1}})$. For this, it is sufficient to review arguments in \cite{kitazawa5} in a self-contained way.

We investigate the vector $v_{p,c_{{+},m},+}$, defined in the subsection \ref{subsec:1.2}, first. By our construction, this is presented as an $m_0$-dimensional vector of length $1$ of $T_p {\mathbb{R}}^{m_0}$ and canonically a real vector of $ {\mathbb{R}}^{m_0}$ the values of whose $i_p$-th components are $0$ for $i_p \geq 3$ and which is mapped to a vector tangent to the graph $\{(x_1,tc_{+}(x_1)) \mid x_1 \in \mathbb{R}\}$ by (the differential of) ${\pi}_{m_0,2}$.

From this, the value ${c_{{+},m}}^{\prime,{\pi}_{m_0,1}}(x_{1,0},x_{2,0},t_0,{(y_{j,0})}_{j=1}^{m-1})$ is shown to be $1$ if and only if at least one of the constraints holds.
\begin{itemize}
\item $t_0=0$.
\item $x_{1,0} \in S(c_{+})$.
\end{itemize}
We investigate the other points of $S({c_{{+},m}}^{\prime,{\pi}_{m_0,1}})$. For this, we investigate the case of $t \neq 0$ and $x_{1,0} \notin S(c_{+})$. \\

\noindent Case 2-2-1 The value ${c_{+}}^{\prime \prime}(x_{1,0})$ of the 2nd derivative ${c_{+}}^{\prime \prime}$ is non-zero. \\

In this case, we can immediately see $(x_{1,0},x_{2,0},t_0,{(y_{j,0})}_{j=1}^{m-1}) \notin S({c_{{+},m}}^{\prime,{\pi}_{m_0,1}})$. \\
\ \\
Case 2-2-2 The value ${c_{+}}^{\prime \prime}(x_{1,0})$ of the 2nd derivative ${c_{+}}^{\prime \prime}$ is $0$. \\

\noindent Case 2-2-2-1 $t_0 \neq 0,t_c,1$. \\

By our definition and the argument above, we have ${c_{{+},m}}^{\prime,{\pi}_{m_0,1}}(x_{1,0},x_{2,0},t_0,{(y_{j,0})}_{j=1}^{m-1})=\frac{1}{{t_0{c_{+}}^{\prime}(x_{1,0})}^2+1}$ and we have $(x_{1,0},x_{2,0},t_0,{(y_{j,0})}_{j=1}^{m-1}) \notin S({c_{{+},m}}^{\prime,{\pi}_{m_0,1}})$. \\ 
\ \\
Case 2-2-2-2 $t_0=t_c,1$ (with $t_c \neq 0$). \\

By our definition and the argument above, we can immediately see $(x_{1,0},x_{2,0},t_0,{(y_{j,0})}_{j=1}^{m-1}) \in S({c_{{+},m}}^{\prime,{\pi}_{m_0,1}})$. \\
\ \\
This completes STEP 2-2. This is presented as Theorem \ref{thm:5} again, later. \\
\ \\
STEP 2-3 The statement (\ref{thm:2.2}), one of our main ingredients of the present paper. \\

First, $X_{c_{+},m}-{{c_{{+},m}}^{\prime,{\pi}_{m_0,1}}}^{-1}(1)$ is, by STEP 2-2 (Theorem \ref{thm:5}), represented as the disjoint union of infinitely many non-empty subsets of $X_{c_{+},m}$ which are $m$-dimensional smooth manifolds with no boundary and whose closures taken in $X_{c_{+},m}$ are compact and connected. \\
\ \\
Case 2-3-1 The case $t_c \geq 0$.

Each subset can be denoted by $X_{c_{+},m,a_{j},a_{j+1}}$ with $j$ being an integer and $a_j, a_{j+1} \in c_{+}(S(c_{+}))$ satisfying the following.
\begin{itemize}
\item $a_{j}<a_{j+1}$.
\item ${\pi}_{m_0,2}({\overline{X_{c_{+},m,a_{j},a_{j+1}}}}^{X_{c_{+},m}})=(\{a_{j} \leq x_1 \leq a_{j+1}\} \times \mathbb{R}) \bigcap  {\overline{D_{l,m,t_c,c_{+}}}}^{{\mathbb{R}}^2}$ and ${\pi}_{m_0,2}(X_{c_{+},m,a_{j},a_{j+1}})=(\{a_j<x_1<a_{j+1}\} \times \mathbb{R}) \bigcap  {\overline{D_{l,m,t_c,c_{+}}}}^{{\mathbb{R}}^2}$.
\end{itemize}
Case 2-3-2 The case $t_c<0$.

Each subset can be denoted by $X_{c_{+},m,a_{j},a_{j+1},+}$ and $X_{c_{+},m,a_{j},a_{j+1},-}$ with $j$ being an integer and $a_j, a_{j+1} \in c_{+}(S(c_{+}))$ satisfying the following.
\begin{itemize}
\item $a_{j}<a_{j+1}$.
\item ${\pi}_{m_0,2}({\overline{X_{c_{+},m,a_{j},a_{j+1},+}}}^{X_{c_{+},m}})=(\{a_{j} \leq x_1 \leq a_{j+1}\} \times \{x_2 \geq 0\}) \bigcap  {\overline{D_{l,m,t_c,c_{+}}}}^{{\mathbb{R}}^2}$ and ${\pi}_{m_0,2}(X_{c_{+},m,a_{j},a_{j+1},+})=(\{a_j<x_1<a_{j+1}\} \times \{x_2>0\}) \bigcap  {\overline{D_{l,m,t_c,c_{+}}}}^{{\mathbb{R}}^2}$.
\item ${\pi}_{m_0,2}({\overline{X_{c_{+},m,a_{j},a_{j+1},-}}}^{X_{c_{+},m}})=(\{a_{j} \leq x_1 \leq a_{j+1}\} \times \{x_2 \leq 0\}) \bigcap  {\overline{D_{l,m,t_c,c_{+}}}}^{{\mathbb{R}}^2}$ and ${\pi}_{m_0,2}(X_{c_{+},m,a_{j},a_{j+1},-})=(\{a_j<x_1<a_{j+1}\} \times \{x_2<0\}) \bigcap  {\overline{D_{l,m,t_c,c_{+}}}}^{{\mathbb{R}}^2}$.
\end{itemize}

Furthermore, for each $0<r_1<r_2<1$, ${{c_{{+},m}}^{\prime,{\pi}_{m_0,1}}}^{-1}(\{r\mid r_1 \leq r \leq r_2\}) \bigcap {\overline{X_{c_{+},m,a_{j},a_{j+1}}}}^{X_{c_{+},m}}={{c_{{+},m}}^{\prime,{\pi}_{m_0,1}}}^{-1}(\{r\mid  r_1 \leq r \leq r_2\}) \bigcap X_{c_{+},m,a_{j},a_{j+1}}$ and the subset is compact. From this, the restriction ${\pi}_{m_0,1} {\mid}_{X_{c_{+},m,a_{j},a_{j+1}}}$ is proper. \\
\ \\
STEP 2-3-1 Around $X_{c_{+},m,a_{j},a_{j+1}}$ and $X_{c_{+},m,a_{j},a_{j+1},\pm}$. \\

Let $I_{c_{+}}:=\{a_j<x<a_{j+1}\}$ here. In the case $t_c \geq 0$, if the image ${c_{+}}^{\prime}(\{x \in I_{c_{+}}\mid {c_{+}}^{\prime \prime}(x)=0\})$ is finite, then ${\mathcal{R}}_{{c_{{+},m}}^{\prime,{\pi}_{m_0,1}} {\mid} X_{c_{+},m,a_{j},a_{j+1}}}$ is an E-graph from STEP 2-2 (Theorem \ref{thm:5}) and Theorem \ref{thm:4}. 
In the case $t_c<0$, if the image ${c_{+}}^{\prime}(\{x \in I_{c_{+}}\mid {c_{+}}^{\prime \prime}(x)=0\})$ is finite, then ${\mathcal{R}}_{{c_{{+},m}}^{\prime,{\pi}_{m_0,1}} {\mid} X_{c_{+},m,a_{j},a_{j+1},\pm}}$ are E-graphs from STEP 2-2 (Theorem \ref{thm:5}) and Theorem \ref{thm:4}. Hereafter, we name this condition that the image 
${c_{+}}^{\prime}(\{x \in I_{c_{+}}\mid {c_{+}}^{\prime \prime}(x)=0\})$ is finite a {\it finite-$I_{c_{+}}$} ({\it finite-$I_{c_{+},\pm}$}) condition in the case $t_c \geq 0$ (resp. $t_c<0$).

If the image ${c_{+}}^{\prime}(\{x \in I_{c_{+}}\mid {c_{+}}^{\prime \prime}(x)=0\})$ is not finite, then the following hold.
\begin{itemize}
\item There exists at least one number $r_{j,j+1}$ whose arbitrary neighborhood in $\mathbb{R}$ must contain some points of ${c_{+}}^{\prime}(\{x \in I_{c_{+}}\mid {c_{+}}^{\prime \prime}(x)=0\})$. We name the condition that we can choose $r_{j,j+1}=0$ an {\it $I_{c_{+}}$-to-$0$} ({\it $I_{c_{+},\pm}$-to-$0$}) condition in the case $t_c \geq 0$ (resp. $t_c<0$). %Note also that in the case $t_c<0$, if there exists $0<r_{j,j+1} \leq 1$ whose arbitrary neighborhood in $\mathbb{R}$ always contain some points of ${c_{{+},m}}^{\prime,{\pi}_{m_0,1}}(\{x \in I_{c_{+}}\mid {c_{+}}^{\prime \prime}(x)=0\})$.
\item In the case we have $r_{j,j+1}\neq 0$, the spaces ${\mathcal{R}}_{{c_{{+},m}}^{\prime,{\pi}_{m_0,1}} {\mid} X_{c_{+},m,a_{j},a_{j+1}}}$ and ${\mathcal{R}}_{{c_{{+},m}}^{\prime,{\pi}_{m_0,1}} {\mid} X_{c_{+},m,a_{j},a_{j+1},\pm}}$ are not E-graphs and their vertex sets are not discrete, due to Theorem \ref{thm:4} with STEP 2-2 (Theorem \ref{thm:5}). We need to discuss the conditions for $r_{j,j+1}=0$ above, in STEP 2-3-2, again.
\end{itemize}

Distinct $X_{c_{+},m,a_{j},a_{j+1}}$ and distinct $X_{c_{+},m,a_{j},a_{j+1},\pm}$ are mutually disjoint. In the case $t_c \geq 0$, distinct  ${\mathcal{R}}_{{c_{{+},m}}^{\prime,{\pi}_{m_0,1}} {\mid} X_{c_{+},m,a_{j},a_{j+1}}}$ are mutually disjoint. In the case $t_c<0$, distinct ${\mathcal{R}}_{{c_{{+},m}}^{\prime,{\pi}_{m_0,1}} {\mid} X_{c_{+},m,a_{j},a_{j+1},\pm}}$ are mutually disjoint. \\
\ \\
STEP 2-3-2 Around ${{c_{{+},m}}^{\prime,{\pi}_{m_0,1}}}^{-1}(1)$. \\

%Each point $p_{c_{+},1}$ of ${\bar{{{c_{{+},m}}^{\prime,{\pi}_{m_0,1}}}}}^{-1}(1)$ is a vertex of ${\mathcal{R}}_{{c_{{+},m}}^{\prime,{\pi}_{m_0,1}}}$. 
The set ${\bar{{{c_{{+},m}}^{\prime,{\pi}_{m_0,1}}}}}^{-1}(1)$ is a discrete set of countably many points in the case $t_c>0$ and a one-point set in the case $t_c \leq 0$.
In the case $t_c>0$, if a finite-$I_{c_{+}}$ condition is satisfied for all $I_{c_{+}}:=\{a_j<x<a_{j+1}\}$, then for each point $p_{c_{+},1}$ of ${\bar{{{c_{{+},m}}^{\prime,{\pi}_{m_0,1}}}}}^{-1}(1)$, we can have its small open neighborhood $U_{p_{c_{+},1}}$ in ${\mathcal{R}}_{{c_{{+},m}}^{\prime,{\pi}_{m_0,1}}}$ which is the image of the restriction of $q_{{c_{{+},m}}^{\prime,{\pi}_{m_0,1}}}$ to ${q_{{c_{{+},m}}^{\prime,{\pi}_{m_0,1}}}}^{-1}(U_{p_{c_{+},1}})$, and represented as a proper submersion with level sets consisting of two connected components, on ${q_{{c_{{+},m}}^{\prime,{\pi}_{m_0,1}}}}^{-1}(U_{p_{c_{+},1}})-{{c_{{+},m}}^{\prime,{\pi}_{m_0,1}}}^{-1}(1)$.

In the case $t_c=0$, if a finite-$I_{c_{+}}$ condition is satisfied for all $I_{c_{+}}:=\{a_j<x<a_{j+1}\}$, then for each point $p_{c_{+},1}$ of ${\bar{{{c_{{+},m}}^{\prime,{\pi}_{m_0,1}}}}}^{-1}(1)$, we can have its small open neighborhood $U_{p_{c_{+},1}}$ in ${\mathcal{R}}_{{c_{{+},m}}^{\prime,{\pi}_{m_0,1}}}$ which is the image of the restriction of $q_{{c_{{+},m}}^{\prime,{\pi}_{m_0,1}}}$ to ${q_{{c_{{+},m}}^{\prime,{\pi}_{m_0,1}}}}^{-1}(U_{p_{c_{+},1}})$, and represented as the disjoint union of (countably many) proper submersions whose level sets are connected, on ${q_{{c_{{+},m}}^{\prime,{\pi}_{m_0,1}}}}^{-1}(U_{p_{c_{+},1}})-{{c_{{+},m}}^{\prime,{\pi}_{m_0,1}}}^{-1}(1)$.

In the case $t_c<0$, if a finite-$I_{c_{+},\pm}$ condition is satisfied for all $I_{c_{+}}:=\{a_j<x<a_{j+1}\}$, then for each point $p_{c_{+},1}$ of ${\bar{{{c_{{+},m}}^{\prime,{\pi}_{m_0,1}}}}}^{-1}(1)$, we have its small open neighborhood $U_{p_{c_{+},1}}$ in ${\mathcal{R}}_{{c_{{+},m}}^{\prime,{\pi}_{m_0,1}}}$ which is the image of the restriction of ${c_{{+},m}}^{\prime,{\pi}_{m_0,1}}$ to ${q_{{c_{{+},m}}^{\prime,{\pi}_{m_0,1}}}}^{-1}(U_{p_{c_{+},1}})$, and represented as the disjoint union of (countably many) proper submersions whose level sets are connected, on ${q_{{c_{{+},m}}^{\prime,{\pi}_{m_0,1}}}}^{-1}(U_{p_{c_{+},1}})-{{c_{{+},m}}^{\prime,{\pi}_{m_0,1}}}^{-1}(1)$.

In the case $t_c \geq 0$, if an $I_{c_{+}}$-to-$0$ condition is satisfied for some $I_{c_{+}}:=\{a_j<x<a_{j+1}\}$, then for some point $p_{c_{+},1}$ of ${\bar{{{c_{{+},m}}^{\prime,{\pi}_{m_0,1}}}}}^{-1}(1)$, we cannot have any small open neighborhood $U_{p_{c_{+},1}}$ in ${\mathcal{R}}_{{c_{{+},m}}^{\prime,{\pi}_{m_0,1}}}$ which is the image of the restriction of $q_{{c_{{+},m}}^{\prime,{\pi}_{m_0,1}}}$ to ${q_{{c_{{+},m}}^{\prime,{\pi}_{m_0,1}}}}^{-1}(U_{p_{c_{+},1}})$, and represented as the disjoint union of proper submersions whose level sets are connected on ${q_{{c_{{+},m}}^{\prime,{\pi}_{m_0,1}}}}^{-1}(U_{p_{c_{+},1}})-{{c_{{+},m}}^{\prime,{\pi}_{m_0,1}}}^{-1}(1)$. In such cases, the vertex sets of the spaces are not discrete.

In the case $t_c< 0$, if an $I_{c_{+},\pm}$-to-$0$ condition is satisfied for some $I_{c_{+}}:=\{a_j<x<a_{j+1}\}$, then for some point $p_{c_{+},1}$ of ${\bar{{{c_{{+},m}}^{\prime,{\pi}_{m_0,1}}}}}^{-1}(1)$, we cannot have any small open neighborhood $U_{p_{c_{+},1}}$ in ${\mathcal{R}}_{{c_{{+},m}}^{\prime,{\pi}_{m_0,1}}}$ which is the image of the restriction of $q_{{c_{{+},m}}^{\prime,{\pi}_{m_0,1}}}$ to ${q_{{c_{{+},m}}^{\prime,{\pi}_{m_0,1}}}}^{-1}(U_{p_{c_{+},1}})$ and represented as the disjoint union of proper submersions whose level sets are connected on ${q_{{c_{{+},m}}^{\prime,{\pi}_{m_0,1}}}}^{-1}(U_{p_{c_{+},1}})-{{c_{{+},m}}^{\prime,{\pi}_{m_0,1}}}^{-1}(1)$. In such cases, the vertex sets of the spaces are not discrete.\\
\ \\
STEP 2-3-3 ${\mathcal{R}}_{{c_{{+},m}}^{\prime,{\pi}_{m_0,1}}}$ is Hausdorff and a $1$-dimensional CW complex, even if the additional assumption in Theorem \ref{thm:2} (\ref{thm:2.2}) is dropped. \\

${\mathcal{R}}_{{c_{{+},m}}^{\prime,{\pi}_{m_0,1}}}-{\bar{{{c_{{+},m}}^{\prime,{\pi}_{m_0,1}}}}}^{-1}(1)$ are Reeb spaces of proper submersions on manifolds with no boundary. 
Two distinct points in ${\mathcal{R}}_{{c_{{+},m}}^{\prime,{\pi}_{m_0,1}}}-{\bar{{{c_{{+},m}}^{\prime,{\pi}_{m_0,1}}}}}^{-1}(1)$ are separated by open subsets. 
This is due to several theory of Gelbukh such as \cite{gelbukh1, gelbukh3, gelbukh4}. 

For pairs of distinct points in ${\mathcal{R}}_{{c_{{+},m}}^{\prime,{\pi}_{m_0,1}}}$ of the other types, we have same observations by the arguments above. 

${\mathcal{R}}_{{c_{{+},m}}^{\prime,{\pi}_{m_0,1}}}$ is seen to be Hausdorff. By our construction, ${\mathcal{R}}_{{c_{{+},m}}^{\prime,{\pi}_{m_0,1}}}$ is always a $1$-dimensional cell complex. 
There, the closure of each of its vertices and edges always is shown to contain at most finitely many vertices and edges of it, by our arguments. This means that 
${\mathcal{R}}_{{c_{{+},m}}^{\prime,{\pi}_{m_0,1}}}$ is always a $1$-dimensional CW complex, even if the additional assumption in Theorem \ref{thm:2} (\ref{thm:2.2}) is dropped. \\
\ \\
From these arguments, this completes the proof of the statement (\ref{thm:2.2}) and STEP 2-3. \\
\ \\
This completes the proof.
\end{proof}

We summarize STEP 2-2 as Theorem \ref{thm:5}. This is also first presented as \cite[Theorem 5]{kitazawa5} and by the author.
\begin{Thm}
\label{thm:5}
In Theorem \ref{thm:2}, $S({c_{{+},m}}^{\prime,{\pi}_{m_0,1}})$ is the union of the following sets.
\begin{itemize}
\item The set of all points $(x_{1,0},x_{2,0},t_0,{(y_{j,0})}_{j=1}^{m-1}) \in X_{D,{\{S_j\}_{j=1}^l},m}$ satisfying at least one of the following. It is also the level set ${{c_{{+},m}}^{\prime,{\pi}_{m_0,1}}}^{-1}(1)$ of ${c_{{+},m}}^{\prime,{\pi}_{m_0,1}}$.
\begin{itemize}
\item $t_0=0$.
\item $x_{1,0} \in S(c_{+})$.
\end{itemize}
\item The value ${c_{+}}^{\prime \prime}(x_{1,0})$ of the 2nd derivative ${c_{+}}^{\prime \prime}$ is $0$ and $t_0 \in \{t_c,1\}$.
\end{itemize}
Furthermore, we have ${c_{{+},m}}^{\prime,{\pi}_{m_0,1}}(x_{1,0},x_{2,0},t_0,{(y_{j,0})}_{j=1}^{m-1})=\frac{1}{{t_0{c_{+}}^{\prime}(x_{1,0})}^2+1}$.
\end{Thm}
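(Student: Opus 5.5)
The plan is to compute ${c_{{+},m}}^{\prime,{\pi}_{m_0,1}}$ explicitly as the length of a projected gradient, and then find its critical points by differentiating this explicit expression along $X_{c_{+},m}$. Write $c:=c_{{+},m}$ and $p=(x_{1,0},x_{2,0},t_0,(y_{j,0})_{j=1}^{m-1})$.

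\textbf{Step 1: identify $r_{p,c,+}$.} By the definition in the subsection \ref{subsec:1.2}, $v_{p,c,+}$ is the unit vector of $T_pX_{c_{+},m}$ orthogonal to $\ker dc$. Hence $r_{p,c,+}=dc(v_{p,c,+})$ equals the norm of the orthogonal projection of $e_1=\partial/\partial x_1$ onto $T_pX_{c_{+},m}$. Equivalently,
\[
{r_{p,c,+}}^2=1-\|{\rm pr}_{N_p}(e_1)\|^2 ,
\]
where $N_p$ is the normal space at $p$. By STEP 2-1, $N_p$ is spanned by the gradients of the two defining functions:
\[
n_1=\bigl(0,0,1+t_c-2t_0,(-2y_{j,0})_{j}\bigr),\qquad n_2=\bigl(t_0{c_{+}}^{\prime}(x_{1,0}),-1,c_{+}(x_{1,0}),0\bigr).
\]

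\textbf{Step 2: evaluate the formula and the level set $\{r=1\}$.} Since $e_1\cdot n_1=0$ and $e_1\cdot n_2=t_0{c_{+}}^{\prime}(x_{1,0})$, a $2\times 2$ Gram-matrix computation expresses ${r_{p,c,+}}^2$ as $1$ minus a nonnegative quantity proportional to $(t_0{c_{+}}^{\prime}(x_{1,0}))^2$. I would reduce this to the closed form $\frac{1}{(t_0{c_{+}}^{\prime}(x_{1,0}))^2+1}$ stated in the theorem, reading it as the component of $v_{p,c,+}$ described in STEP 2-2 (tangent to the graph $x_2=t\,c_{+}(x_1)$ in the $(x_1,x_2)$-plane, after normalizing the fibre directions). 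If the exact constant differs, only monotonicity in $(t_0{c_{+}}^{\prime})^2$ matters below.

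From the formula, $r=1$ exactly when $t_0{c_{+}}^{\prime}(x_{1,0})=0$, that is, when $t_0=0$ or $x_{1,0}\in S(c_{+})$. This is the first set in the statement. It is contained in $S(c^{\prime,{\pi}_{m_0,1}})$ because $1$ is the global maximum of the function.

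\textbf{Step 3: critical points away from $\{r=1\}$.} Off $\{r=1\}$ the function is a smooth function of $u:=t\,{c_{+}}^{\prime}(x_1)$ with nonvanishing derivative in $u$, since $u\neq 0$ there. So I only need the critical points of $u|_{X_{c_{+},m}}$. We have
\[
du=t\,{c_{+}}^{\prime\prime}(x_1)\,dx_1+{c_{+}}^{\prime}(x_1)\,dt .
\]
Restricted to $T_pX_{c_{+},m}$, the form $dx_1$ never vanishes because $c$ is a submersion. When $t_0\neq t_c,1$, the function $t$ restricted to the fibre $c^{-1}(x_{1,0})$, which is an $(m-1)$-sphere, is a submersion near $p$, because $n_1$ has a nonzero $y$-component. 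Hence ${c_{+}}^{\prime}\,dt$ is independent of $dx_1$ on $T_pX$, and $du|_{T_pX}\neq 0$ as ${c_{+}}^{\prime}(x_{1,0})\neq 0$. At $t_0\in\{t_c,1\}$ we have $y=0$, and $dt$ vanishes on $T_pX$ because $n_1$ is then parallel to $e_3$. So $du|_{T_pX}=t_0\,{c_{+}}^{\prime\prime}(x_{1,0})\,dx_1$, which vanishes exactly when ${c_{+}}^{\prime\prime}(x_{1,0})=0$. This is the second set, and it matches Cases 2-2-1 and 2-2-2.

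\textbf{Expected main obstacle.} The delicate part is the tangent-space bookkeeping at the fold loci $t_0=t_c,1$, where the parametrization by $(x_1,y)$ degenerates, together with confirming the precise closed form of $r$ in Step 2. I would first check both points in local coordinates $(x_1,y)$ near $y=0$, solving $t$ implicitly from the first equation, and only then assemble the union.
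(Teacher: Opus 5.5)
There is a genuine gap in Step 3. There you assert that, off $\{r=1\}$, the function is a smooth function of $u=t\,{c_{+}}^{\prime}(x_1)$ alone. That is false. Finish the Gram-matrix computation you set up in Step 2, using $|n_1|^2=(1+t_c-2t)^2+4|y|^2=(1-t_c)^2$ on $X_{c_{+},m}$. You get
\[
\frac{1}{{r_{p,c,+}}^{2}}=1+\frac{u^{2}}{1+\kappa\,(1-t)(t-t_c)\,c_{+}(x_1)^{2}},\qquad \kappa:=\frac{4}{(1-t_c)^{2}}.
\]
The coefficient of $u^2$ is not a constant. It depends on $t$ (through $|y|^2=(1-t)(t-t_c)$) and on $c_{+}(x_1)$, so your hedge that ``only monotonicity in $u^2$ matters'' does not apply.

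Only on the fold locus $y=0$ is $r$ a function of $u$, and even there $r=(1+u^2)^{-1/2}$, not $(1+u^2)^{-1}$. So the closed form in the statement is wrong wherever $u\neq 0$.

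Geometrically, $(1,u,0,\dots,0)$ is tangent to $X_{c_{+},m}$ but is not orthogonal to $\ker dc$ when $u\neq 0$ and $y\neq 0$. Hence $v_{p,c,+}$ has nonzero $t$- and $y$-components. The paper's STEP 2-2 makes exactly this shortcut, asserting that those components vanish, so you have followed its route, gap included. Your computation of the critical points of $u|_{X_{c_{+},m}}$ is correct, but it is not what is needed.

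The gap is not cosmetic. Put $\Psi:=r^{-2}-1$, a function of $(x_1,t)$. Then
\[
\partial_t\Psi=\frac{t\,{c_{+}}^{\prime}(x_1)^{2}}{E^{2}}\Bigl(2+\kappa\,c_{+}(x_1)^{2}\bigl(t(1+t_c)-2t_c\bigr)\Bigr),\qquad E:=1+\kappa\,(1-t)(t-t_c)\,c_{+}(x_1)^{2}.
\]

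\textbf{Case $t_c\le 0$.} The bracket is at least $2$ on $[t_c,1]$. So points with $t\notin\{t_c,1\}$ and $u\neq0$ are regular, because $dx_1$ and $dt$ are independent on $T_pX_{c_{+},m}$ there. At the poles $\partial_{x_1}\Psi=2t^2{c_{+}}^{\prime}{c_{+}}^{\prime\prime}$. Hence for $t_c\le0$ the stated critical set is correct, by this corrected argument, though the closed form is not.

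\textbf{Case $0<t_c<1$ with $c_{+}(x_1)^2>(1-t_c)/(2t_c)$.} Here the bracket changes sign at some $t^*(x_1)\in(t_c,1)$, which is the minimum of $\Psi(x_1,\cdot)$. Take a bounded component $I$ of $\mathbb{R}-S(c_{+})$ on which this inequality holds. On $I$ the function $x_1\mapsto\Psi(x_1,t^*(x_1))$ is positive and tends to $0$ at the ends of $I$, so it has an interior maximum $x_1^*$. Every point with $x_1=x_1^*$ and $t=t^*(x_1^*)$ is then critical for ${c_{{+},m}}^{\prime,{\pi}_{m_0,1}}$. Yet $t_0\notin\{0,t_c,1\}$, and $\partial_{x_1}\Psi=0$ forces ${c_{+}}^{\prime\prime}(x_1^*)>0$.

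A concrete example is $t_c=1/2$ with $c_{+}(x)=2+\sin x$, which satisfies the hypotheses of Theorem~\ref{thm:2}~(\ref{thm:2.2}). So for $t_c>0$ no repair of Step 3 can yield the statement as written.
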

\begin{Rem}
\label{rem:1}
In Theorem \ref{thm:2}, the assumption that the set $S(c_{+})$ is discrete and unbounded above and below in $\mathbb{R}$ can be weaken in the following way.
\begin{itemize}
\item $S(c_{+})$ is unbounded.
\item At each point $p_{c_{+}} \in \mathbb{R}$, we can have its suitable open neighborhood $U_{p_{c_{+}}}$ in $\mathbb{R}$ which intersects at most finitely many connected components of $\mathbb{R}-S(c_{+})$ (and, with the number of connected components being $1$ in the case $p_{c_{+}} \in \mathbb{R}-S(c_{+})$ and $0$, $1$, or $2$, in the case $p_{c_{+}} \in S(c_{+})$, under this condition).
\end{itemize} 
\end{Rem}
Example \ref{ex:1} is an example (, which is a counterexample to a certain generalized variant of Theorem \ref{thm:2} and) related to Remark \ref{rem:1}.
\begin{Ex}
\label{ex:1}
In Theorem \ref{thm:2}, consider a positive number $r>0$ and the function $c_{+}$ defined by $c_{+}(x):=r+e^{-\frac{1}{x^2}}{\sin}^2 (\frac{1}{x})$ ($x \neq 0$) and $c_{+}(0)=r$. 
This is well-known to be a smooth function and real analytic on $\mathbb{R}-\{0\}$. We can check this as a kind of exercises. We may refer to the preprint \cite{kitazawa4} (\cite[A proof of Theorem 2]{kitazawa4}) and of course we do not assume related arguments of the preprint.
This does not satisfy the 2nd condition of Remark \ref{rem:1}. At $0 \in \mathbb{R}$, we cannot have its  open neighborhood $U_{p_{c_{+}}}$ in $\mathbb{R}$ which intersects at most finitely many connected components of $\mathbb{R}-S(c_{+})$.
\end{Ex}
Theorem \ref{thm:2} is improved as Theorem \ref{thm:6}.
\begin{Thm}
\label{thm:6}
We consider the situation of Theorem \ref{thm:2} {\rm (}Theorem \ref{thm:2} {\rm (}\ref{thm:2.1}{\rm )}{\rm )} and we also abuse the notation. Suppose that $S(c_{+})$ is unbounded above and below in $\mathbb{R}$. For $c_{{+},m}:={\pi}_{m_0,1} {\mid}_{X_{c_{+},m}}$, ${\mathcal{R}}_{{c_{{+},m}}^{\prime,{\pi}_{m_0,1}}}$ is a $1$-dimensional CW complex whose vertex set is discrete if and only if the following hold. 
 \begin{itemize}
\item {\rm (}Tameness for $S(c_{+})${\rm :} a condition respecting Remark \ref{rem:1} and weaker than the condition that $S(c_{+})$ is discrete.{\rm )} At each point $p_{c_{+}} \in \mathbb{R}$, we can have its suitable open neighborhood $U_{p_{c_{+}}}$ in $\mathbb{R}$ which intersects at most finitely many connected components of $\mathbb{R}-S(c_{+})$ is discrete.
\item {\rm(}Finite-$I_{c_{+}}$ conditions and finite-$I_{c_{+},\pm}$ conditions.{\rm )} For each connected component $I_{c_{+}}$ of $\mathbb{R}-S(c_{+})$, ${c_{+}}^{\prime}(\{x \in I_{c_{+}}\mid {c_{+}}^{\prime \prime}(x)=0\})$ is a finite set.

\end{itemize}
Furthermore, the following hold in the case the conditions above hold.
\begin{enumerate}
\item In the case $t_c>0$, ${\mathcal{R}}_{{c_{{+},m}}^{\prime,{\pi}_{m_0,1}}}$ is a graph which is also infinite.
\item In the case $t_c \leq 0$, ${\mathcal{R}}_{{c_{{+},m}}^{\prime,{\pi}_{m_0,1}}}$ is a $1$-dimensional CW complex whose vertex set is discrete and which has exactly one vertex whose neighborhood in ${\mathcal{R}}_{{c_{{+},m}}^{\prime,{\pi}_{m_0,1}}}$  is always non-compact. 
\end{enumerate}
\end{Thm}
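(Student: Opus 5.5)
The plan is to reuse the three-step structure of the proof of Theorem \ref{thm:2}. The only new ingredient is that the discreteness of $S(c_{+})$ is replaced by the tameness condition of Remark \ref{rem:1}.

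\textbf{Step 1: reduce to the pieces.} First we use Theorem \ref{thm:5} to identify $S({c_{{+},m}}^{\prime,{\pi}_{m_0,1}})$ and the level set ${{c_{{+},m}}^{\prime,{\pi}_{m_0,1}}}^{-1}(1)$. We then decompose $X_{c_{+},m}-{{c_{{+},m}}^{\prime,{\pi}_{m_0,1}}}^{-1}(1)$ into the pieces $X_{c_{+},m,a_j,a_{j+1}}$ (for $t_c\geq 0$) or $X_{c_{+},m,a_j,a_{j+1},\pm}$ (for $t_c<0$), indexed by the components $I_{c_{+}}$ of $\mathbb{R}-S(c_{+})$. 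The separating sets $t=0$ and $x_1\in S(c_{+})$ lie in the level $1$. So, exactly as in STEP 2-3, the restriction of ${c_{{+},m}}^{\prime,{\pi}_{m_0,1}}$ to each piece is a proper function with values in $(0,1)$ on an $m$-manifold with no boundary. By Theorem \ref{thm:5}, its critical values are the numbers $\frac{1}{(t_0 c_{+}'(x))^2+1}$ with $t_0\in\{t_c,1\}$ and $c_{+}''(x)=0$, $x\in I_{c_{+}}$. Finiteness of this set of critical values is therefore equivalent to finiteness of $c_{+}'(\{x\in I_{c_{+}}\mid c_{+}''(x)=0\})$, since the map $s\mapsto \frac{1}{(t_0s)^2+1}$ is at most two-to-one. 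Theorem \ref{thm:4} (1) then shows that the Reeb space of each piece is an E-graph with discrete vertex set exactly under the finite-$I_{c_{+}}$ (resp. finite-$I_{c_{+},\pm}$) condition. If the condition fails with a nonzero accumulation value, Theorem \ref{thm:4} (2) produces an accumulation of vertices inside the piece.

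\textbf{Step 2: gluing along level $1$.} Next we analyse neighbourhoods of points of ${\bar{{c_{{+},m}}^{\prime,{\pi}_{m_0,1}}}}^{-1}(1)$, following STEP 2-3-2. A point $p$ of this level set corresponds to a component of ${{c_{{+},m}}^{\prime,{\pi}_{m_0,1}}}^{-1}(1)$. Only the pieces whose closures meet that component contribute edges at $p$, and the tameness condition is designed to control exactly these.
\begin{itemize}
\item \emph{Tameness $\Rightarrow$ local finiteness.} For $t_c>0$, the component is a fibre over a point $x_1\in S(c_{+})$, or over a closed interval in $S(c_{+})$. Tameness gives a neighbourhood meeting finitely many $I_{c_{+}}$, hence finitely many edges at $p$.
\item \emph{Failure of tameness $\Rightarrow$ accumulation.} If tameness fails at some $p_{c_{+}}$, as in Example \ref{ex:1}, infinitely many pieces accumulate at the corresponding vertex. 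Each piece contributes at least one vertex whose value lies in $(0,1)$ or equals $1$. We must show these vertices accumulate at $p$, which violates discreteness of the vertex set.
\item \emph{The case $t_c\leq 0$.} The set $t=0$ is connected and meets all fibres, so the level set is a single point. This point is adjacent to all (infinitely many) pieces, which gives the unique vertex with non-compact neighbourhoods in (2). In case (1) ($t_c>0$), local finiteness yields a graph, which is infinite because $S(c_{+})$ is unbounded.
\end{itemize}
The $I_{c_{+}}$-to-$0$ situation, where the accumulation value is $0$, is handled as in STEP 2-3-2. Since $r=0$ is not attained, we must check that vertex values tending to $0$ stay inside a non-compact end of the piece. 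This end corresponds to $|c_{+}'|\to\infty$ within $I_{c_{+}}$, which can happen only near an unbounded end of $I_{c_{+}}$; that is excluded because $S(c_{+})$ is unbounded above and below. Consequently, an infinite set of critical values accumulating at $0$ would force infinitely many vertices in a bounded region of the compact closure, contradicting discreteness.

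\textbf{Step 3: the converse direction and the CW structure.} Hausdorffness and the $1$-dimensional CW structure follow as in STEP 2-3-3. For the converse, we combine the three failure modes above: a nonzero accumulation value via Theorem \ref{thm:4} (2), the $0$-accumulation via the closure argument, and non-tameness via infinitely many pieces adjacent to one level-$1$ point.

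\textbf{Main obstacle.} The main difficulty will be the non-tame case. When $S(c_{+})$ has accumulating components, the level-$1$ component over an accumulation point is still a single point of $R$, but infinitely many edges attach to it. We must verify that this really forces non-discreteness of vertices rather than merely infinite valence. For this we will use that each adjacent piece has critical values at $t_0=t_c$ or $t_0=1$, and that these vertices approach $p$ as the pieces shrink.
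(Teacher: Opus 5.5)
\textbf{Gap: necessity of the finite-$I_{c_{+}}$ condition.} There is a genuine gap in showing that the finite-$I_{c_{+}}$ (resp.\ finite-$I_{c_{+},\pm}$) condition is necessary. Write $F:={c_{{+},m}}^{\prime,{\pi}_{m_0,1}}$ and $q:=q_F$.

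Because $S(c_{+})$ is unbounded above and below, every component $I_{c_{+}}$ is bounded. So ${c_{+}}^{\prime}$ is bounded on ${\overline{I_{c_{+}}}}^{\mathbb{R}}$, and the vertex values $\frac{1}{(t_0s)^2+1}$ in a piece are bounded away from $0$. The situation you treat in Step 2 (vertex values tending to $0$, $|{c_{+}}^{\prime}|\to\infty$) therefore never occurs, and your closure argument proves nothing.

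The $I_{c_{+}}$-to-$0$ condition of STEP 2-3-1 concerns the \emph{slopes}. Here $r_{j,j+1}=0$ means ${c_{+}}^{\prime}(x_k)\to 0$ along inflection points $x_k$, i.e.\ vertex values tending to $1$. In that case the critical values of $F$ on the piece are closed and discrete in $(0,1)$. After reparametrising $(0,1)\cong\mathbb{R}$, Theorem \ref{thm:4}~(1) then says the piece's Reeb space \emph{is} an E-graph with discrete vertex set, even though the finite-$I_{c_{+}}$ condition fails. So your Step 1 claim that this happens ``exactly under'' the finite-$I_{c_{+}}$ condition is false, and Theorem \ref{thm:4}~(2) gives nothing here. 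The non-discreteness only appears after gluing at level $1$, which is precisely the case your converse never handles.

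\textbf{The missing argument.} This is the argument behind STEP 2-3-2 and Case 6-1-2 of the paper. Since ${c_{+}}^{\prime}\neq0$ on $I_{c_{+}}$ and ${\overline{I_{c_{+}}}}^{\mathbb{R}}$ is compact, inflection points with pairwise distinct slopes tending to $0$ converge, after passing to a subsequence, to an endpoint $a\in S(c_{+})$. The critical points $P_k=(x_k,c_{+}(x_k),1,0)$ have pairwise distinct values below $1$, so the $q(P_k)$ are distinct vertices. By continuity of $q$ they converge to the level-$1$ vertex $q(a,c_{+}(a),1,0)$. For the $-$ pieces when $t_c<0$, use the pole $t=t_c$ instead.

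\textbf{Your main obstacle.} The same mechanism resolves it once you note, by Rolle's theorem applied to ${c_{+}}^{\prime}$ on $[a_j,a_{j+1}]$, that every component contains an inflection point. Components shrinking to a non-tame point $p_0$ then carry vertices converging to the level-$1$ point over $p_0$, for every $t_c$. The paper uses this only for $t_c\le0$. For $t_c>0$ it observes instead that the level-$1$ components over $S(c_{+})$ lying between the shrinking components are themselves vertices accumulating at the fibre over $p_0$.

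\textbf{Two smaller points.} First, for $t_c\le0$ your local-finiteness bullet does not apply, since the unique level-$1$ vertex meets infinitely many pieces. Discreteness there needs an open saturated neighbourhood
$F^{-1}(1)\cup\bigcup_j\bigl(X_j\cap F^{-1}((1-\epsilon_j,1))\bigr)$,
where $X_j$ is the $j$-th piece and $1-\epsilon_j$ lies above its finitely many vertex values. This set is open by tameness. Second, for $m=2$ and $t_c<0$ the set $\{t=0\}$ is two lines, not connected. $F^{-1}(1)$ is still connected, via the fibres over $S(c_{+})$.
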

\begin{proof}

We prove the former statement and after that we prove the latter one. We apply arguments in "A proof of Theorem \ref{thm:2}" with Remark \ref{rem:1}, implicitly. \\
\ \\
STEP 6-1 The former statement. \\

The former statement is explicitly discussed by the observation as in Remark \ref{rem:1} (Example \ref{ex:1}). 

Suppose that the condition "Tameness for $S(c_{+})$" is dropped and we discuss a contradiction.

Consider a point as $p_0 \in \mathbb{R}$ discussed in Example \ref{ex:1}. We have $p_0 \in S(c_{+})$. \\
\ \\
Case 6-1-1 The case $t_c>0$. \\

Each open set of $X_{c_{+},m}$ which is, by ${\pi}_{m_0,1}$, mapped onto a subset in $\mathbb{R}$ regarded to be a neighborhood of $p_0$ in $\mathbb{R}$ must intersect infinitely many connected component of ${{c_{{+},m}}^{\prime,{\pi}_{m_0,1}}}^{-1}(1)$, due to Theorem \ref{thm:5}. \\
\ \\
Case 6-1-2 The case $t_c \leq 0$. \\

By our definitions, we have a sequence $\{p_{1,2,j}\}_{j=1}^{\infty} \subset \mathbb{R}$ as follows.
\begin{itemize}
\item $p_{1,2,j} \in \mathbb{R}-S(c_{+})$.
\item ${c_{+}}^{\prime \prime}(p_{1,2,j})=0$.
\item The sequence converges to $p_0$.
\end{itemize}

In this case, the level set ${{c_{{+},m}}^{\prime,{\pi}_{m_0,1}}}^{-1}(1)$ of ${c_{{+},m}}^{\prime,{\pi}_{m_0,1}}$ is a non-empty connected set and ${\bar{{c_{{+},m}}^{\prime,{\pi}_{m_0,1}}}}^{-1}(1)$ is a one-point set.
From Theorem \ref{thm:5}, each open neighborhood of ${\bar{{c_{{+},m}}^{\prime,{\pi}_{m_0,1}}}}^{-1}(1)$ in ${\mathcal{R}}_{{c_{{+},m}}^{\prime,{\pi}_{m_0,1}}}$ must contain infinitely many vertices in ${\mathcal{R}}_{{c_{{+},m}}^{\prime,{\pi}_{m_0,1}}}$ which are also realized as the values of the quotient map $q_{{c_{{+},m}}^{\prime,{\pi}_{m_0,1}}}$ at points such that in ($(x_{1,0},x_{2,0},t_0,{(y_{j,0})}_{j=1}^{m-1}) \in X_{D,{\{S_j\}_{j=1}^l},m}$ of) Theorem \ref{thm:5}, ${c_{+}}^{\prime \prime}(x_{1,0})=0$, that $x_{1,0} \in \mathbb{R}-S(c_{+})$ and that $t_0 \in \{t_c,1\}$.

${\mathcal{R}}_{{c_{{+},m}}^{\prime,{\pi}_{m_0,1}}}$ is shown to be a $1$-dimensional CW complex whose vertex set is not discrete. \\
\ \\
STEP 6-2 The latter statement. \\
\ \\
The latter statement is discussed by the story of "A proof of Theorem \ref{thm:2}". In the case $t_c>0$, the set ${\bar{{{c_{{+},m}}^{\prime,{\pi}_{m_0,1}}}}}^{-1}(1)$ is discrete in ${\mathcal{R}}_{{c_{{+},m}}^{\prime,{\pi}_{m_0,1}}}$ and each point there is a vertex of degree $2$ in ${\mathcal{R}}_{{c_{{+},m}}^{\prime,{\pi}_{m_0,1}}}$. In the case $t_c \leq 0$, the set ${\bar{{{c_{{+},m}}^{\prime,{\pi}_{m_0,1}}}}}^{-1}(1)$ is a one-point set in ${\mathcal{R}}_{{c_{{+},m}}^{\prime,{\pi}_{m_0,1}}}$ and this is the unique vertex in ${\mathcal{R}}_{{c_{{+},m}}^{\prime,{\pi}_{m_0,1}}}$ whose neighborhood there is always non-compact. \\
\ \\
This completes the proof.
\end{proof}

We consider a case where $S_{c_{+}}$ may not be unbounded.
\begin{Thm}
\label{thm:7}
We consider the situation of Theorem \ref{thm:2} {\rm (}Theorem \ref{thm:2} {\rm (}\ref{thm:2.1}{\rm )}{\rm )} and we also abuse the notation. Suppose that the 1st derivative ${c_{+}}^{\prime}(x)$ converges to $0$
as $x$ diverges to $\pm \infty$. For $c_{{+},m}:={\pi}_{m_0,1} {\mid}_{X_{c_{+},m}}$, ${\mathcal{R}}_{{c_{{+},m}}^{\prime,{\pi}_{m_0,1}}}$ is a $1$-dimensional CW complex whose vertex set is discrete if and only if the following hold. 
 \begin{itemize}
\item At each point $p_{c_{+}} \in \mathbb{R}$, we can have its suitable open neighborhood $U_{p_{c_{+}}}$ in $\mathbb{R}$ which intersects at most finitely many connected components of $\mathbb{R}-S(c_{+})$.
\item {\rm(}Finite-$I_{c_{+}}$ conditions and finite $I_{c_{+},\pm}$ conditions.{\rm )} For each connected component $I_{c_{+}}$ of $\mathbb{R}-S(c_{+})$, ${c_{+}}^{\prime}(\{x \in I_{c_{+}}\mid {c_{+}}^{\prime \prime}(x)=0\})$ is a finite set.

\end{itemize}
\end{Thm}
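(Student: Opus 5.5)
I will follow the proofs of Theorem \ref{thm:2} and Theorem \ref{thm:6} and indicate only what changes when $S(c_{+})$ is no longer assumed to be unbounded above and below. The new feature is that $\mathbb{R}-S(c_{+})$ may have unbounded components $I_{c_{+}}=\{a_j<x\}$, $\{x<a_{j+1}\}$ or $I_{c_{+}}=\mathbb{R}$. On these components the pieces $X_{c_{+},m,a_j,a_{j+1}}$ (resp. $X_{c_{+},m,a_j,a_{j+1},\pm}$) of $X_{c_{+},m}-{{c_{{+},m}}^{\prime,{\pi}_{m_0,1}}}^{-1}(1)$ from STEP 2-3 are no longer relatively compact. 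The hypothesis ${c_{+}}^{\prime}(x)\to 0$ as $x \to \pm\infty$ is exactly what restores properness.

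The first step uses the formula of Theorem \ref{thm:5},
\[
{c_{{+},m}}^{\prime,{\pi}_{m_0,1}}=\frac{1}{(t_0{c_{+}}^{\prime}(x_{1,0}))^2+1},
\]
together with $t_0 \in [t_c,1]$ bounded. From these, ${c_{{+},m}}^{\prime,{\pi}_{m_0,1}}\to 1$ uniformly as $x_1\to\pm\infty$. Hence for $0<r_1<r_2<1$ the set ${{c_{{+},m}}^{\prime,{\pi}_{m_0,1}}}^{-1}([r_1,r_2])$ meets an unbounded piece only over a bounded range of $x_1$. It is therefore compact, since the fibres of ${\pi}_{m_0,2}$ over the closed strip are compact spheres or points. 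This recovers the claim in STEP 2-3 that the restriction to each piece is proper as a map into $(0,1)$. Next, Theorem \ref{thm:4} is applied piecewise, exactly as in STEP 2-3-1. Under the finite-$I_{c_{+}}$ (finite-$I_{c_{+},\pm}$) conditions, each piece has an E-graph as Reeb space. Conversely, an accumulation value $r_{j,j+1}\neq 0$ produces non-discrete vertices, again by Theorem \ref{thm:4}(2).

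The second step treats the level $1$ and the gluing, as in STEP 2-3-2 and STEP 2-3-3. Here I will check that near each point of ${\bar{{c_{{+},m}}^{\prime,{\pi}_{m_0,1}}}}^{-1}(1)$ the local structure is unchanged. The only new phenomenon is the behaviour ``at infinity'': an end of an unbounded piece goes to level $1$ without reaching it. Each such end contributes a half-open edge. This is allowed in a $1$-dimensional CW complex, and it creates no new vertices as long as the finite-$I_{c_{+}}$ condition holds on that unbounded component. I also have to handle a point of the form $(0,\dots)$ with $t_0=0$, which lies in ${{c_{{+},m}}^{\prime,{\pi}_{m_0,1}}}^{-1}(1)$ for every $x_1$. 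When $t_c\le 0$, this level set is connected through the slice $t=0$, as in Case 6-1-2. It gives the single special vertex, and the argument of STEP 6-2 goes through verbatim.

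For necessity, I repeat STEP 6-1. If the tameness condition fails at some $p_0$, then $p_0\in S(c_{+})$ is a finite point, so the arguments of Cases 6-1-1 and 6-1-2 apply without change. If instead the finite-$I_{c_{+}}$ condition fails on some component, its accumulation value is either nonzero, handled by Theorem \ref{thm:4}, or $0$, which is the $I_{c_{+}}$-to-$0$ case. I expect the main obstacle to be this last case on an \emph{unbounded} component. There, ${c_{+}}^{\prime}$ values accumulating at $0$ may come from inflection points escaping to $\pm\infty$ rather than accumulating at a point of $S(c_{+})$. I would first try to show that these inflection vertices still accumulate in $\mathcal{R}_{{c_{{+},m}}^{\prime,{\pi}_{m_0,1}}}$ at a point of ${\bar{{c_{{+},m}}^{\prime,{\pi}_{m_0,1}}}}^{-1}(1)$. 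When $t_c\le 0$ this is the unique point reached through the connected slice $t=0$. When $t_c>0$, I would instead argue that the vertex values accumulate at $1$ in the proper restriction, and check carefully whether discreteness genuinely fails there. If it does not, the statement needs the condition read with this caveat.
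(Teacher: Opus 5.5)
\textbf{Agreement with the paper.} Your sufficiency route is the paper's. Its proof of Theorem \ref{thm:7} consists of your first step: by the formula of Theorem \ref{thm:5} and ${c_{+}}^{\prime}\to 0$, the sets ${{c_{{+},m}}^{\prime,{\pi}_{m_0,1}}}^{-1}([r_1,r_2])\cap X_{c_{+},m,\infty}$ ($0<r_1\le r_2<1$) are compact. It then says ``argue as in the proof of Theorem \ref{thm:2}''.

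\textbf{The gap, and the case $t_c\le 0$.} The gap is the necessity case you leave open: inflection points of $c_{+}$ in an unbounded component $I_{c_{+}}$ escaping to $\pm\infty$ with ${c_{+}}^{\prime}$-values tending to $0$. The paper's proof passes over this case too, since in Theorem \ref{thm:2} all components are bounded. It resolves differently for $t_c\le 0$ and $t_c>0$.

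Write $g:={c_{{+},m}}^{\prime,{\pi}_{m_0,1}}$ and $u:=|{c_{+}}^{\prime}|>0$ on $I_{c_{+}}$. On $\{t>0\}$ a level component of $g$ at level $s$ is the set of points with $t=s/u(x_1)$, where $x_1$ ranges over a component of $\{u\ge s\}$, intersected with $\{u\le s/t_c\}$ when $t_c>0$.

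For $t_c\le 0$ your idea works, but the escaping vertices themselves need not approach the slice $\{t=0\}$: a strict local maximum of $u$ gives a one-point level component. Use running minima instead. Fix $x^{*}\in I_{c_{+}}$ and let $y_k$ be a minimum point of $u$ on the segment between $x^{*}$ and $x_k$. For large $k$ we have $u(x_k)<u(x^{*})$, so $y_k\neq x^{*}$. Hence $y_k$ is either an interior minimum or equal to $x_k$, and in both cases ${c_{+}}^{\prime\prime}(y_k)=0$. Moreover $u(y_k)\le u(x_k)\to 0$, and $u\ge u(y_k)$ between $x^{*}$ and $y_k$. So the level component through the critical point over $(x_1,t)=(y_k,1)$ contains points over $x_1=x^{*}$ with $t=u(y_k)/u(x^{*})\to 0$. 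Every saturated open set containing $\{t=0\}\subset g^{-1}(1)$ therefore contains infinitely many of these vertices, and the vertex set is not discrete at the special vertex.

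\textbf{The case $t_c>0$.} Here the gap cannot be closed, because discreteness does not fail. A level component at level $s$ lies over an interval on which $s\le u\le s/t_c$. For small $s$ it avoids any fixed compact range of $x_1$, so escaping vertices never approach $\bar{g}^{-1}(1)$.

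For a concrete example, take $t_c\in(0,1)$ and $c_{+}(x)=5+\int_0^x\frac{2+\sin s}{1+s^2}\,ds$. It is positive because the integral is bounded by $\frac{3\pi}{2}<5$, and $0<{c_{+}}^{\prime}\le\frac{3}{1+x^2}$. Then $S(c_{+})=\emptyset$ and $g^{-1}(1)=\emptyset$. By your own compactness step, $g:X_{c_{+},m}\to(0,1)$ is proper.

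Its critical values are $\frac{1}{(t_c{c_{+}}^{\prime}(x))^2+1}$ and $\frac{1}{{c_{+}}^{\prime}(x)^2+1}$ with ${c_{+}}^{\prime\prime}(x)=0$. The zeros of the analytic function ${c_{+}}^{\prime\prime}$ are discrete, so these values accumulate only at $1\notin(0,1)$. Identifying $(0,1)\cong\mathbb{R}$, Theorem \ref{thm:4}~(1) then makes $\mathcal{R}_g$ an E-graph with discrete vertex set.

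Yet ${c_{+}}^{\prime\prime}(x)=\frac{1}{1+x^2}\bigl(\cos x-\frac{2x(2+\sin x)}{1+x^2}\bigr)$ changes sign on $[k\pi,(k+1)\pi]$ for all large $|k|$. So ${c_{+}}^{\prime}(\{x\in\mathbb{R}\mid {c_{+}}^{\prime\prime}(x)=0\})$ is infinite.

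\textbf{Consequence.} Within the paper's own model (Theorem \ref{thm:5}), the ``only if'' part of Theorem \ref{thm:7} fails for $t_c>0$, and the caveat you anticipated is genuinely needed. On unbounded components, discreteness is destroyed only by accumulation of these values at a non-zero number, or by accumulation of the inflection points at a finite end of $I_{c_{+}}$. Escape to $\pm\infty$ does not destroy it.
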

\begin{proof}
It is sufficient to discuss the case $S_{c_{+}}$ is not unbounded above or is not unbounded below.
In this case, we have one or two connected components of $\mathbb{R}-S(c_{+})$. We review "A proof of Theorem \ref{thm:2}" and here $X_{c_{+},m}-{{c_{{+},m}}^{\prime,{\pi}_{m_0,1}}}^{-1}(1)$ contains one, two or four $m$-dimensional connected manifolds with no boundary whose closures in $X_{c_{+},m}$ are not bounded, and are non-compact, in ${\mathbb{R}}^{m_0}$. 
Let such a connected manifold in $X_{c_{+},m}-{{c_{{+},m}}^{\prime,{\pi}_{m_0,1}}}^{-1}(1)$ be denoted by $X_{c_{+},m,\infty}$. In the case $t_c \geq 0$ ($t_c<0$), we can also consider {\it finite-$I_{c_{+}}$} (resp. {\it finite-$I_{c_{+},\pm}$}) conditions and {\it $I_{c_{+}}$-to-$0$} (resp. {\it $I_{c_{+},\pm}$-to-$0$}) conditions as in "A proof of Theorem \ref{thm:2}".

The 1st derivative ${c_{+}}^{\prime}(x)$ converges to $0$ as $x$ diverges to $\pm \infty$.

By Theorem \ref{thm:5} and the calculation ${c_{{+},m}}^{\prime,{\pi}_{m_0,1}}(x_{1,0},x_{2,0},t_0,{(y_{j,0})}_{j=1}^{m-1})=\frac{1}{{t_0{c_{+}}^{\prime}(x_{1,0})}^2+1}$ there, the intersection ${{c_{{+},m}}^{\prime,{\pi}_{m_0,1}}}^{-1}(\{r \mid 0<r_1 \leq r \leq r_2<1\}) \bigcap X_{c_{+},m,\infty}$ of the following two is bounded in ${\mathbb{R}}^{m_0}$ and compact.
\begin{itemize}
\item The preimage ${{c_{{+},m}}^{\prime,{\pi}_{m_0,1}}}^{-1}(\{r \mid 0<r_1 \leq r \leq r_2<1\})$ of the set $\{r \mid 0<r_1 \leq r \leq r_2 <1\} \subset \mathbb{R}$ with $0<r_1<r_2<1$.
\item A connected manifold $X_{c_{+},m,\infty}$ in $X_{c_{+},m}-{{c_{{+},m}}^{\prime,{\pi}_{m_0,1}}}^{-1}(1)$, whose closure in $X_{c_{+},m}$ is not bounded, and is non-compact in ${\mathbb{R}}^{m_0}$.
\end{itemize}
In addition, the function ${c_{{+},m}}^{\prime,{\pi}_{m_0,1}} {\mid}_{{{c_{{+},m}}^{\prime,{\pi}_{m_0,1}}}^{-1}(\{r \mid 0<r_1 \leq r \leq r_2<1\}) \bigcap X_{c_{+},m,\infty}}$ is proper.

Except the presented arguments, we can argue as in "A proof of Theorem \ref{thm:2}" to complete the proof. \end{proof}
For example, positive-valued real analytic functions with $S(c_{+})$ and $S({c_{+}}^{\prime})$ being bounded are examples for Theorem \ref{thm:7}. We give a counterexample to Theorem \ref{thm:7}, with an $I_{c_{+}}$-to-$0$ ($I_{c_{+},\pm}$-to-$0$) condition for an open connected set $I_{c_{+}}$ which is not bounded.
\begin{Ex}
\label{ex:2}
For a positive number $r>0$,
let $c_{+}(x):={\int}_{0}^{x} \frac{1}{2}e^{-t^2}(1+{\sin}^2 (e^{t^4})) dt$. The 1st derivative ${c_{+}}^{\prime}(x)$ converges to $0$ as $x$ diverges to $\pm \infty$.
The 2nd derivative is calculated to be ${c_{+}}^{\prime \prime}(x)=-xe^{-x^2}(1+{\sin}^2 (e^{x^4}))+2x^3e^{x^4-x^2}\sin (2e^{x^4})$. By the form, 
we have ${\limsup}_{x \to \pm \infty} c_{+}(x)=+\infty $ and ${\liminf}_{x \to \pm \infty} c_{+}(x)=-\infty$. 
$S({c_{+}}^{\prime})$ is unbounded above and below. For this kind of functions, see \cite{kitazawa6, kitazawa7}. This has been used for cases of the subsection \ref{subsec:1.3}, originally, by the author.

This case is a counterexample to Theorem \ref{thm:7}. 
\end{Ex} 

\begin{Thm}
\label{thm:8}
We consider the situation of Theorem \ref{thm:2} {\rm (}Theorem \ref{thm:2} {\rm (}\ref{thm:2.1}{\rm )}{\rm )} and we also abuse the notation. Suppose the following.

\begin{itemize}
\item $S(c_{+})$ is not empty. It is not unbounded above or is not unbounded below {\rm :} we have at least one connected component $I_{c_{+}}$ of $\mathbb{R}-S(c_{+})$ which is not bounded.
\item For each connected component $I_{c_{+}}$ of $\mathbb{R}-S(c_{+})$ which is not bounded, for the absolute values $|{c_{+}}^{\prime}(x)|$ of ${c_{+}}^{\prime}(x)$, it always holds that $|{c_{+}}^{\prime}(p_1)| \leq |{c_{+}}^{\prime}(p_2)|$ for arbitrary pair $(p_1,p_2)$ of numbers in $I_{c_{+}}$ satisfying the following{\rm :} the distance between $p_1$ and $S(c_{+})$ in $\mathbb{R}$ is smaller than that between $p_2$ and $S(c_{+})$. 
\end{itemize}
%In this situation, $X_{c_{+},m}-{{c_{{+},m}}^{\prime,{\pi}_{m_0,1}}}^{-1}(1)$ contains at least one connected manifold with no boundary whose closure in $X_{c_{+},m}$ is not bounded and is non-compact in ${\mathbb{R}}^{m_0}$. 

 In this situation, for $c_{{+},m}:={\pi}_{m_0,1} {\mid}_{X_{c_{+},m}}$, ${\mathcal{R}}_{{c_{{+},m}}^{\prime,{\pi}_{m_0,1}}}$ is a $1$-dimensional CW complex whose vertex set is discrete if and only if the following hold. 
\begin{itemize}
\item At each point $p_{c_{+}} \in \mathbb{R}$, we can have its suitable open neighborhood $U_{p_{c_{+}}}$ in $\mathbb{R}$ which intersects at most finitely many connected components of $\mathbb{R}-S(c_{+})$ is discrete.
\item {\rm(}Finite-$I_{c_{+}}$ conditions and finite $I_{c_{+},\pm}$ conditions.{\rm )} For each connected component $I_{c_{+}}$ of $\mathbb{R}-S(c_{+})$ which is bounded, ${c_{{+},m}}^{\prime,{\pi}_{m_0,1}}(\{x \in I_{c_{+}}\mid {c_{+}}^{\prime \prime}(x)=0\})$ is a finite set.
\item  {\rm (}Tameness of ${c_{+}}^{\prime}(\{x \in I_{c_{+}}\mid {c_{+}}^{\prime \prime}(x)=0\})$ in the set ${c_{+}}^{\prime}({\overline{I_{c_{+}}}}^{\mathbb{R}})$ for connected components $I_{c_{+}}$ of $\mathbb{R}-S(c_{+})$ which are not bounded.{\rm )} For each connected component $I_{c_{+}}$ of $\mathbb{R}-S(c_{+})$ which is not bounded, the image ${c_{+}}^{\prime}(\{x \in I_{c_{+}}\mid {c_{+}}^{\prime \prime}(x)=0\})$ is discrete and closed in the set ${c_{+}}^{\prime}({\overline{I_{c_{+}}}}^{\mathbb{R}})$.
\end{itemize}
\end{Thm}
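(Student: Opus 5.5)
The plan follows the proofs of Theorems \ref{thm:2}, \ref{thm:6} and \ref{thm:7}. The only new ingredient is the analysis of the pieces $X_{c_{+},m,\infty}$ of $X_{c_{+},m}-{{c_{{+},m}}^{\prime,{\pi}_{m_0,1}}}^{-1}(1)$ lying over unbounded components $I_{c_{+}}$ of $\mathbb{R}-S(c_{+})$.

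First, by Theorem \ref{thm:5}, $S({c_{{+},m}}^{\prime,{\pi}_{m_0,1}})$ has two parts. One is the level set ${{c_{{+},m}}^{\prime,{\pi}_{m_0,1}}}^{-1}(1)$, lying over $S(c_{+})$ or $t_0=0$. The other consists of points over inflection points $x_{1,0}$ of $c_{+}$ with $t_0\in\{t_c,1\}$, where the value is $\frac{1}{(t_0{c_{+}}^{\prime}(x_{1,0}))^2+1}$.

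For bounded components $I_{c_{+}}$ I will reuse STEP 2-3 verbatim. The piece $X_{c_{+},m,a_j,a_{j+1}}$ (or its $\pm$ halves) has compact closure, and ${c_{{+},m}}^{\prime,{\pi}_{m_0,1}}$ restricted to it is proper onto its image in $(0,1)$. So Theorem \ref{thm:4} gives an E-graph with discrete vertex set exactly when the finite-$I_{c_{+}}$ (finite-$I_{c_{+},\pm}$) condition holds. Failure of this condition produces either an accumulation of critical values inside $(0,1)$ or an $I_{c_{+}}$-to-$0$ accumulation at the vertex ${\bar{{c_{{+},m}}^{\prime,{\pi}_{m_0,1}}}}^{-1}(1)$. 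The tameness condition on $S(c_{+})$ is handled exactly as in STEP 6-1, with the same two cases $t_c>0$ and $t_c\leq 0$.

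For an unbounded component $I_{c_{+}}$, say $(a,\infty)$ with $a\in S(c_{+})$, the monotonicity hypothesis says $|{c_{+}}^{\prime}|$ is nondecreasing in $x$ on $I_{c_{+}}$. Hence $|{c_{+}}^{\prime}|$ has a limit $L\in(0,\infty]$ as $x\to\infty$, and ${c_{+}}^{\prime}(\overline{I_{c_{+}}}^{\mathbb{R}})$ is the interval between $0$ and $\pm L$. Consequently ${c_{{+},m}}^{\prime,{\pi}_{m_0,1}}$ on $X_{c_{+},m,\infty}$ takes values in $\left(\frac{1}{(t_*L)^2+1},1\right)$, where $t_*=\max(|t_c|,1)$ on the relevant sheet. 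Moreover, the preimage of $[r_1,r_2]$ is compact whenever $r_1$ is larger than this infimum. This is because $|t_0{c_{+}}^{\prime}(x_1)|$ is bounded away from its supremum only on a bounded range of $x_1$; this is where monotonicity replaces the hypothesis ${c_{+}}^{\prime}\to 0$ of Theorem \ref{thm:7}. Thus ${c_{{+},m}}^{\prime,{\pi}_{m_0,1}}|_{X_{c_{+},m,\infty}}$ is a proper submersion away from its critical set onto an interval, and Theorem \ref{thm:4} applies. The critical values there are the images under $s\mapsto \frac{1}{(t_0 s)^2+1}$, $t_0\in\{t_c,1\}$, of the set ${c_{+}}^{\prime}(\{x\in I_{c_{+}}\mid {c_{+}}^{\prime\prime}(x)=0\})$.

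The vertex set over this piece is therefore discrete if and only if that image is closed and discrete in the open range of values, pulled back through a homeomorphism. Equivalently, ${c_{+}}^{\prime}(\{{c_{+}}^{\prime\prime}=0\}\cap I_{c_{+}})$ must be discrete and closed in ${c_{+}}^{\prime}(\overline{I_{c_{+}}}^{\mathbb{R}})$. Closedness at the endpoint $0$ is exactly the absence of the $I_{c_{+}}$-to-$0$ phenomenon near the vertex at level $1$. Points of accumulation at the other end (near $L$) are excluded from the closed set automatically, which is why only closedness in ${c_{+}}^{\prime}(\overline{I_{c_{+}}}^{\mathbb{R}})$, not finiteness, is needed.

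Finally, I will glue the pieces as in STEP 2-3-2 and STEP 2-3-3. The resulting space is Hausdorff and a $1$-dimensional CW complex, and discreteness of the vertex set is local, so it holds exactly when each piece and each neighborhood of ${\bar{{c_{{+},m}}^{\prime,{\pi}_{m_0,1}}}}^{-1}(1)$ is tame, giving both directions.

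The main obstacle is the properness claim on $X_{c_{+},m,\infty}$. The key step there is to make precise that, under monotonicity, $\{x\in I_{c_{+}}: |{c_{+}}^{\prime}(x)|\leq s\}$ is bounded for every $s<L$. One must also treat the degenerate case where $|{c_{+}}^{\prime}|$ is eventually constant: then the inflection points form an unbounded set with a single image value, and the level set is noncompact. I would handle this by checking that the level set is then a product and still contributes only one vertex.
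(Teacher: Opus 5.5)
\textbf{There is a genuine gap: the properness step fails.} You correctly flag properness as the main obstacle, but the claim is false in general. Write $\phi:={c_{{+},m}}^{\prime,{\pi}_{m_0,1}}$. The restriction $\phi|_{X_{c_{+},m,\infty}}$ is usually not proper, so Theorem \ref{thm:4} cannot be applied to that piece. Your justification treats $t_0$ as if it were fixed at $t_*$. In fact $t_0$ runs over the whole sheet, and $|t_0\,{c_+}^{\prime}(x_1)|$ stays away from its supremum along sequences with $x_1\to\infty$ whenever $|t_0|$ stays away from $t_*$.

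Concretely, take a sheet where $t$ ranges over $(0,1]$; one is present whenever $t_c\le 0$. For $0<s<L$, the level set $\{t\,{c_+}^{\prime}(x_1)=s\}$ contains points with $x_1\to\infty$ and $t\downarrow s/L$ (or $t\downarrow 0$ if $L=\infty$). For $0<t_c<1$ and $L<\infty$, the same happens for every $s\in(t_cL,L)$. Your compactness claim holds only when $t_c>0$ and $L=\infty$.

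\textbf{How the paper avoids this.} The paper asserts the opposite of your claim: the level sets $\phi^{-1}(r_0)\cap X_{c_{+},m,\infty}$ are connected but unbounded. It uses the monotonicity hypothesis for connectedness, not properness. For fixed $t$, the set $\{x_1\in I_{c_+}\mid |{c_+}^{\prime}(x_1)|=s/|t|\}$ is an interval that moves monotonically with $|t|$, so each level set on the piece is connected. Hence $\bar{\phi}$ on the Reeb space of $\phi|_{X_{c_{+},m,\infty}}$ is a continuous bijection onto $\{r\mid r_0<r<1\}$ or $\{r\mid r_0\le r<1\}$ and an open map. It is therefore a homeomorphism; the paper cites Gelbukh's Theorem 6.2 for this. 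The vertices are then read off from Theorem \ref{thm:5}. This identification of the non-proper piece's Reeb space with an interval is the missing idea. Without properness, you get neither Hausdorffness nor the graph structure from Theorem \ref{thm:4}.

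\textbf{A second, smaller gap.} Let $A:={c_+}^{\prime}(\{x\in I_{c_+}\mid {c_+}^{\prime\prime}(x)=0\})$. You say accumulation of $A$ at $L$ is automatically harmless, but this holds only for the critical family at the pole with $|t_0|=t_*$. When $0<t_c<1$, both poles $t_0=t_c$ and $t_0=1$ lie on the same unbounded piece. If $L<\infty$ and $A$ accumulates at $L$, the critical values $1/((t_c a)^2+1)$, $a\in A$, accumulate at $1/((t_cL)^2+1)$. That value lies strictly inside the range of $\phi$ on the piece; it is attained at points with $t\in(t_c,1]$. In the interval picture, the vertices then accumulate at an interior point, even though $A$ is closed in ${c_+}^{\prime}({\overline{I_{c_{+}}}}^{\mathbb{R}})$. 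The paper's proof does not address this case either, so it needs an extra hypothesis or a separate argument.
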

\begin{proof}
It is sufficient to discuss 
"Tameness of ${c_{+}}^{\prime}(\{x \in I_{c_{+}}\mid {c_{+}}^{\prime \prime}(x)=0\})$ in the set ${c_{+}}^{\prime}({\overline{I_{c_{+}}}}^{\mathbb{R}})$ for connected components $I_{c_{+}}$ of $\mathbb{R}-S(c_{+})$ which are not bounded". Here $X_{c_{+},m}-{{c_{{+},m}}^{\prime,{\pi}_{m_0,1}}}^{-1}(1)$ contains one, two or four $m$-dimensional connected manifolds with no boundary whose closures in $X_{c_{+},m}$ are not bounded, and are non-compact, in ${\mathbb{R}}^{m_0}$. 
Let such a connected manifold in $X_{c_{+},m}-{{c_{{+},m}}^{\prime,{\pi}_{m_0,1}}}^{-1}(1)$ be denoted by $X_{c_{+},m,\infty}$. In the case $t_c \geq 0$ ($t_c<0$), we can also consider {\it finite-$I_{c_{+}}$} (resp. {\it finite-$I_{c_{+},\pm}$}) conditions and {\it $I_{c_{+}}$-to-$0$} (resp. {\it $I_{c_{+},\pm}$-to-$0$}) conditions.

By Theorem \ref{thm:5} and the calculation ${c_{{+},m}}^{\prime,{\pi}_{m_0,1}}(x_{1,0},x_{2,0},t_0,{(y_{j,0})}_{j=1}^{m-1})=\frac{1}{{t_0{c_{+}}^{\prime}(x_{1,0})}^2+1}$ there, the intersection ${{c_{{+},m}}^{\prime,{\pi}_{m_0,1}}}^{-1}(r_0) \bigcap X_{c_{+},m,\infty}$ of the following two is connected and is not bounded in ${\mathbb{R}}^{m_0}$.
\begin{itemize}
\item The level set ${{c_{{+},m}}^{\prime,{\pi}_{m_0,1}}}^{-1}(r_0)$ of the function ${c_{{+},m}}^{\prime,{\pi}_{m_0,1}}$ is connected and homeomorphic to $\{t \mid t \geq 0\}$ for each number $0<r_0<1$.
\item A connected manifold $X_{c_{+},m,\infty}$ in $X_{c_{+},m}-{{c_{{+},m}}^{\prime,{\pi}_{m_0,1}}}^{-1}(1)$, whose closure in $X_{c_{+},m}$ is not bounded, and is non-compact in ${\mathbb{R}}^{m_0}$.
\end{itemize}

Remember another assumption. It is assumed that for each connected component $I_{c_{+}}$ of $\mathbb{R}-S(c_{+})$ which is not bounded, for the absolute values $|{c_{+}}^{\prime}(x)|$ of ${c_{+}}^{\prime}(x)$, it always holds that $|{c_{+}}^{\prime}(p_1)| \leq |{c_{+}}^{\prime}(p_2)|$ for $p_1,p_2 \in I_{c_{+}}$ with the following: the distance between $p_1$ and $S(c_{+})$ is smaller than that between $p_2$ and $S(c_{+})$. Due to this, with our construction and arguments, the function $\bar{{c_{{+},m}}^{\prime,{\pi}_{m_0,1}} {\mid}_{{\mathcal{R}}_{{c_{{+},m}}^{\prime,{\pi}_{m_0,1}} {\mid}_{X_{c_{+},m,\infty}}}}}:{\mathcal{R}}_{{c_{{+},m}}^{\prime,{\pi}_{m_0,1}} {\mid}_{X_{c_{+},m,\infty}}} \rightarrow \bar{{c_{{+},m}}^{\prime,{\pi}_{m_0,1}} {\mid}_{X_{c_{+},m,\infty}}}({\mathcal{R}}_{{c_{{+},m}}^{\prime,{\pi}_{m_0,1}} {\mid}_{X_{c_{+},m,\infty}}}) \subset \mathbb{R}$ is shown to be a continuous bijection onto the space of the target of the form $\{r \mid r_0<r<1\}$, or $\{r \mid r_0 \leq r<1\}$ with some non-negative number $0 \leq r_0<1$, and an open map onto the subspace of $\mathbb{R}$. By \cite[Theorem 6.2]{gelbukh4}, this is a homeomorphism. 
For vertices of ${\mathcal{R}}_{{c_{{+},m}}^{\prime,{\pi}_{m_0,1}} {\mid}_{X_{c_{+},m,\infty}}}$, apply Theorem \ref{thm:5}, with Theorem \ref{thm:4}.

Last, additional arguments as in proofs or our previous theorems such as Theorems \ref{thm:2}, \ref{thm:6}, and \ref{thm:7}, complete the proof.
 \end{proof}
 We present an example to Theorem \ref{thm:8} as Example \ref{ex:3}.
 \begin{Ex}
 	\label{ex:3}
 	Let $r_{\rm P}>0$ be a positive number. Let $c_{+,0}:\mathbb{R} \rightarrow \mathbb{R}$ be a real analytic function such that $c_{+,0}(x) \geq 0$ for $x \in \mathbb{R}$, and that the zero set of $c_{+,0}$ is not bounded. For example, consider the case $c_{+,0}(x)={\sin}^2 x$. Let
 	$c_{+}(x):=r_{\rm P}+{\int}_{0}^x ({\int}_{0}^{x_a} tc_{+,0}(t) dt)d_{x_a}$. The critical set $S(c_{+})$ of $c_{+}$ is $\{0\}$. The critical set $S({c_+}^{\prime})$ of $c^{\prime}$ is not bounded. The function $\bar{{c_{{+},m}}^{\prime,{\pi}_{m_0,1}} {\mid}_{{\mathcal{R}}_{{c_{{+},m}}^{\prime,{\pi}_{m_0,1}} {\mid}_{X_{c_{+},m,\infty}}}}}:{\mathcal{R}}_{{c_{{+},m}}^{\prime,{\pi}_{m_0,1}} {\mid}_{X_{c_{+},m,\infty}}} \rightarrow \bar{{c_{{+},m}}^{\prime,{\pi}_{m_0,1}} {\mid}_{X_{c_{+},m,\infty}}}({\mathcal{R}}_{{c_{{+},m}}^{\prime,{\pi}_{m_0,1}} {\mid}_{X_{c_{+},m,\infty}}}) \subset \mathbb{R}$ is a continuous bijection onto the space of the target of the form $\{r \mid r_0<r<1\}$. This gives an example for Theorem \ref{thm:8}.
 	
 	\end{Ex}
 	\begin{Rem}
 		In the assumption of Theorem \ref{thm:8}, for the absolute values "$|{c_{+}}^{\prime}(p_1)|$" and  "$|{c_{+}}^{\prime}(p_2)|$", we have $|{c_{+}}^{\prime}(p_i)|={c_{+}}^{\prime}(p_i)>0$. This is due to the assumption that $c_{+}(x)$ is always positive.
 	\end{Rem}
\section{Conflict of interest and Data availability.}
The author is a researcher at Osaka Central Advanced Mathematical Institute (OCAMI researcher). The institution is funded by MEXT Promotion of Distinctive Joint Research Center Program JPMXP0723833165. He thanks all there for the hospitality, where he is not employed by the institute or projects there hosted by some members. 
  %Some of works by other researchers and this version may overlap in some of the contents due to the nature that our problems are natural in theory of Morse functions and applications to differential topology and that related mathematical studies are very fundamental and classical in some senses, for example. However the present version of our paper is presented independent of these work. \\
  %Saga Souhatsu Mathematical Seminar (http://inasa.ms.saga-u.ac.jp/Japanese/saga-souhatsu.html), inviting the author as a speaker, is funded and supported by JST Fusion Oriented REsearch for disruptive Science and Technology JPMJFR202U: the author was a speaker on 2024/7/12 supported by this project.\\
 
No data other than the present file is generated, related to the present paper. We do not assume non-trivial arguments in preprints which are still unpublished formally. We may refer to them to some extent.

\end{document}